\numberwithin{equation}{section}
\definecolor{skyblue}{rgb}{0.85,0.85,1}
\newtheorem{lemma}{Lemma}
\newtheorem{theorem}{Theorem}
\newtheorem{cor}{Corollary}
\newtheorem{rem}{Remark}
\newtheorem{define}{Definition}
\newcommand{\bbC}{\mathbb{C}}
\newcommand{\bbR}{\mathbb{R}}
\newcommand{\bbZ}{\mathbb{Z}}
\newcommand{\vp}{\varphi}
\newcommand{\p}{\partial}
\DeclareMathOperator{\Gr}{Gr}
\newcommand{\eps}{\epsilon}
\begin{document}

\title{A stability index for traveling waves in activator-inhibitor systems}
\author[P. Cornwell]{Paul Cornwell}
\email{pcorn@live.unc.edu}
\address{Department of Mathematics, UNC Chapel Hill, Phillips Hall CB \#3250, Chapel Hill, NC 27516}
\author[C.\,K.\,R.\,T.\ Jones]{Christopher K.\,R.\,T.\ Jones}
\email{ckrtj@email.unc.edu}

\begin{abstract}
We consider the stability of nonlinear traveling waves in a class of activator-inhibitor systems. The eigenvalue equation arising from linearizing about the wave is seen to preserve the manifold of Lagrangian planes for a nonstandard symplectic form. This allows us to define a Maslov index for the wave corresponding to the spatial evolution of the unstable bundle. We formulate the Evans function for the eigenvalue problem and show that the parity of the Maslov index determines the sign of the derivative of the Evans function at the origin. The connection between the Evans function and the Maslov index is established by a ``detection form," which identifies conjugate points for the curve of Lagrangian planes.
\end{abstract}

\maketitle

\tableofcontents

\section{Introduction}
The Maslov index is an integer topological invariant assigned to curves of Lagrangian planes. In recent years, it has been used to study the Morse index of linear operators in differential equations applications, see for example \cite{BCJLMS17,CJM15,HLS16,JLM13}. In this work, we define the Maslov index for traveling waves in activator-inhibitor systems and show how it can be used to give spectral information about the wave. This is accomplished by recasting the Evans function in terms of the symplectic form defining the set of Lagrangian planes. Explicitly, we show that the parity of the Maslov index is the primary factor in determining the sign of the derivative of the Evans function at $\lambda=0$. Employing differential forms, we prove this result entirely using an intersection-based theory of the Maslov index. We introduce a detection form, which is used to identify conjugate points for a curve of Lagrangian spaces. It is the detection form that allows us to bridge the gap from the Evans function to the Maslov index.

The relationship between the Maslov index and the Evans function was discovered by Chardard and Bridges \cite{CB14}. That paper considered evolutionary PDE in which the time-independent part is a finite-dimensional Hamiltonian system in space. Among the systems in this class are reaction-diffusion equations with gradient nonlinearity. In that case, a homoclinic orbit represents a standing wave solution. The stability of such a wave is analyzed by linearizing about the solution and formulating the Evans function $D(\lambda)$ to detect eigenvalues. Since $D(0)=0$ due to translation invariance, the quantity $D'(0)$ provides important information for the stability analysis. Using a symplectic formulation of the Evans function, they were able to show that the sign of $D'(0)$ is determined by the so-called Lazutkin-Treschev invariant, an orientation index for codimension one intersections of Lagrangian planes. They then proved that the sign of this invariant is given by the parity of the Maslov index for the homoclinic orbit, and thus the Maslov index can be used to give the sign of $D'(0)$. This is a very important result, since the Maslov index is a geometric property of the wave itself related to its embedding in phase space. Our goal is to expand this result to a larger class of problems, namely traveling waves in activator-inhibitor systems.

The setting is systems of reaction-diffusion equations of the form \begin{equation}\label{general PDE}
\begin{aligned} u_t & =u_{xx}+f(u)-\sigma v\\
v_t & = v_{xx}+g(v)+\alpha u,
\end{aligned}
\end{equation} where $u,v\in\bbR$, and $x,t\in\bbR$ are space and time respectively. We assume that $f,g\in C^2(\bbR)$, and that $\alpha,\sigma>0$ are real constants. We assume that this system possesses a traveling wave solution, that is, a solution \begin{equation}\vp=(\hat{u},\hat{v})\end{equation} of one variable $z=x-ct$ that decays exponentially as $z\rightarrow\pm\infty$. Without loss of generality, we can take $(\hat{u},\hat{v})\rightarrow(0,0)$ as $z\rightarrow\pm\infty$. We make a few additional assumptions about $\vp(z)$ at the beginning of \S 2.

One of the motivations for studying systems of this form is the so-called ``Turing problem,'' concerning pattern formation in reaction-diffusion equations. In his classic paper \cite{turing}, Turing showed that the equilibrium $(u,v)=(0,0)$ can be stable in the local reaction, but unstable when diffusion is added to the equation. This, in turn, can lead to the formation of patterns and has been invoked to explain patterns appearing in nature, such as stripes on a zebra \cite{Murray}. However, this will happen only under certain circumstances. First, the system must be of \emph{activator-inhibitor} type, meaning that the Jacobian of the nonlinearity evaluated at $(0,0)$ must have opposite signs on both its diagonal and off-diagonal terms. Also, the diffusivities of the two agents $u$ and $v$ must be significantly different. By taking the diffusion coefficients to be the same in (\ref{general PDE}) we are therefore investigating a regime in which Turing bifurcations will not occur. 

In light of the preceding discussion, we assume that $f$ and $g$ are such that the constant solution is stable without diffusion. Writing the nonlinearity in (\ref{general PDE}) as $F:\bbR^2\rightarrow \bbR^2$, we see that the explicit conditions are determined by the trace and determinant of $DF(0)$: \begin{equation}\label{Turing assumptions}
\begin{aligned}
\mathrm{Trace}(DF(0)) & = f'(0)+g'(0)<0\\
\mathrm{Det}(DF(0)) & = f'(0)g'(0)+\sigma\alpha>0.
\end{aligned}
\end{equation}
With the stated sign conventions on $\sigma$ and $\alpha$, we now see that (\ref{general PDE}) is an activator-inhibitor system, susceptible to diffusion-driven instability if the diffusivities were changed. (The signs of $f'(0)$ and $g'(0)$ do not affect what follows.) A fundamental issue in the study of reaction-diffusion equations is how the interplay between diffusion and the nonlinearity can create patterns, traveling waves, and other coherent structures. Equally important is the question of the stability of such structures, and the geometric techniques employed herein aim to shed light on this phenomenon.

For equations of the form (\ref{general PDE}), it is well known \cite{BJ89,Henry} that proving ${\vp}$ is stable is tantamount to showing that the spectrum of the operator $L$ arising from linearizing the traveling wave equation about ${\vp}$ is bounded away from the imaginary axis in the left half-plane. One can then set up the eigenvalue equation as a matrix system \begin{equation}\label{matrix system}
Y'(z)=A(\lambda,z)Y(z),\hspace{.1 in}\left(^\prime=\frac{d}{dz}\right)
\end{equation} which allows for the use of geometric dynamical systems techniques to locate eigenvalues. The classic tool in this pursuit is the Evans function, which is a Wronskian-type determinant that detects linear dependence between solutions that decay in backwards time with those that decay in forwards time. We call these (two-dimensional) solution spaces the unstable bundle $E^u(\lambda,z)$ and the stable bundle $E^s(\lambda,z)$ respectively. Via the Pl\"{u}cker embedding (see \S 3), $E^{u/s}(\lambda,z)$ can be related to elements of the second exterior power of $\bbC^4$, which we call $\tilde{E}^{s/u}(\lambda,z)$. As is done in \cite{AGJ}, the Evans function $D(\lambda)$ is then defined as the wedge product of $\tilde{E}^u(\lambda,z)$ and $\tilde{E}^s(\lambda,z)$. It follows that $D(\lambda)=0$ if and only $\lambda$ is an eigenvalue of $L$. Additionally, the Evans function is known to satisfy the following. \begin{enumerate}
\item $D$ is analytic on an open domain containing the closed right half-plane.
\item $D(\lambda)\in\bbR$ if $\lambda\in\bbR$.
\item The order of $\lambda$ as a root of $D$ equals the algebraic multiplicity of $\lambda$ as an eigenvalue of $L$.
\end{enumerate}
The reader is referred to \S 4.1 of \cite{Sandstede02} or \S9.1-9.3 of \cite{KP13} for an overview of $D(\lambda)$ and its properties.

The key insight in this paper is that there is a symplectic form $\omega$ whose value on solutions can easily be tracked. In particular, the set of Lagrangian planes with respect to this form is invariant. We will show that the space $E^{u}(\lambda,z)$ is $\omega-$Lagrangian for any $\lambda\in\bbR$ and $z\in\bbR$, and hence for fixed $\lambda$, the spatial evolution of $E^u(\lambda,z)$ defines a curve in this set. The set of $\omega-$Lagrangian planes is actually a manifold with cyclic fundamental group, and hence we can assign an index to curves in this space. This is the Maslov index, which will be defined for the traveling wave in \S 5. 

As observed in \cite{BD01,BD03,CB14}, the Evans function can be recast as a ``symplectic determinant'' through the use of a natural volume form on a four-dimensional vector space. In \S 4, we give the corresponding formula for the activator-inhibitor case. This formula will allow us to connect the Maslov index to the Evans function, and hence to the stability of ${\vp}$. Due to translation invariance, it is immediate that $D(0)=0$. For $\lambda\in\bbR$ large, it can also be shown that $D(\lambda)>0$. It follows that the sign of $D'(0)$ can be used as an instability index, or as a stability index if other information about $\sigma(L)$ is known (e.g. \cite{Jones84}). This parity argument has been used many times in the stability analysis of nonlinear waves, for example \cite{AJ94,BD01,PW,GZ98}. Investigating the quantity $D'(0)$ is therefore worthwhile as a means for discovering new mechanisms of instability.

Now focusing on the $\lambda=0$ case, we know that the unstable and stable bundles are spanned by the derivative of the wave $\vp'$ and one other vector each. Suitably normalized, we have then that $E^s(0,z)=\text{sp}\{u_1(z),\vp'\}$ and $E^u(0,z)=\text{sp}\{\vp',u_4(z)\}$. In \S 4, we prove the primary result of this paper: \begin{equation}\label{derivative intro}
D'(0)=e^{cz}\omega(u_1,u_4)\int\limits_{-\infty}^{\infty}e^{cz}\left(\frac{(\hat{u}')^2}{\sigma}-\frac{(\hat{v}')^2}{\alpha}\right)\,dz.
\end{equation} The expression $e^{cz}\omega(u_1,u_4)$, which is actually independent of $z$, is referred to in \cite{CB14} as the \emph{Lazutkin-Treschev} invariant of the wave. The integral in (\ref{derivative intro}) can be calculated numerically if the wave is known, but the Lazutkin-Treschev invariant is more elusive. Briefly, the exponential weight can be distributed so that the solution $u_1$ approaches an eigenvector of the asymptotic matrix $A_\infty(0)$ as $z\rightarrow\infty$. Along with the $z$-independence of $e^{cz}\omega(u_1,u_4)$, this can then be used to argue that $u_4$ (with the remaining weight) approaches another eigenvector of $A_\infty(0)$. However, the orientation of this latter eigenvector is unclear, which is vital in determining the sign of the symplectic form.

It turns out that the Maslov index, which measures the winding of the unstable bundle as it moves along the orbit, is the key to determining the sign of the Lazutkin-Treschev invariant. We will show in \S 5 that \begin{equation}\label{parity result intro}
(-1)^{\mathrm{Maslov}({\vp})}=\text{sign}\left(e^{cz}\omega(u_1,u_4)\right).
\end{equation}
This is the analog of the result proved in \cite{CB14}. The difficulty in treating activator-inhibitor systems lies in the fact that the operator obtained from linearizing about the wave solution is not self-adjoint because the nonlinearity in (\ref{general PDE}) is not a gradient. As a consequence, its spectrum is not necessarily real, and the standard symplectic form is not preserved on solutions to the eigenvalue problem. Furthermore, we have a second obstruction to self-adjointness in the speed $c$. A major part of the analysis lies in determining the role of this parameter.

The topology of the set of Lagrangian planes-the \emph{Lagrangian Grassmannian} $\Lambda(2)$-is understood by realizing $\Lambda(2)$ as the homogeneous space $U(2)/O(2)$. In particular, the Maslov index can be related to the spectral flow of a family of unitary matrices representing a curve of subspaces. This approach is preferred in differential equations applications \cite{CH,CH14,HLS16,HS16,JLS17}, in which the Maslov index is used to count unstable eigenvalues for Schr\"{o}dinger operators. In \cite{CB14} a variant of this approach due to Souriau is used in which the Maslov index is defined for elements in the universal cover of $U(n)$. There is an impediment to taking this perspective for activator-inhibitor systems owing to the fact that a non-standard symplectic form is used. To encode the Lagrangian planes in unitary matrices would require a change of variables in the eigenvalue equation, which would complicate calculating the Maslov index in practice.

Our approach instead uses an entirely intersection-based theory of the Maslov index, without any reference to unitary matrices. Using differential forms, we define a function $\beta$ whose zeros give the dimension of crossings of $E^u(0,z)$ with the train of the stable subspace of the rest state. Derivatives of $\beta$ can be related to the crossing form for intersections of curves of Lagrangian planes, which in turn is used to define the Maslov index. In addition to appealing directly to the intuitive idea of curves crossing hypersurfaces, the use of differential forms in this way is also instrumental in calculating the Maslov index in examples. 

To make use of (\ref{parity result intro}), one would need to calculate the Maslov index of the traveling wave. Efficient ways of doing so numerically were developed in \cite{BM15,CDB09}. Alternatively, one can use the crucial fact that, when $\lambda=0$, the unstable bundle is tangent to the unstable manifold $W^u(0)$ of the rest state along $\vp$. With this pretty interpretation, (\ref{derivative intro}) and (\ref{parity result intro}) can be paraphrased by saying that the derivative of the Evans function at $\lambda=0$ is determined by how many times the unstable manifold twists as it moves along the wave. This observation yields a practical way of calculating $\mathrm{Maslov}(\vp)$ if one has a way of tracking invariant manifolds in the nonlinear system. For example, if there is a timescale separation in the traveling wave equation, then the techniques of geometric singular perturbation \cite{Fen79,JoGSP} can be used to do just that. In \S 6, we outline how the Maslov index can be calculated in a doubly-diffusive FitzHugh-Nagumo system. In contrast with orientation indices using invariant manifolds in \cite{AJ94,Jones84}, we \emph{do not} use derivatives with respect to $c$ to achieve our result.
\section{The Eigenvalue Problem and Evans Function}
Recasting (\ref{general PDE}) in a moving frame, one sees that a traveling wave is a steady state of \begin{equation}\label{traveling wave PDE}
\begin{aligned} u_t&=u_{zz}+cu_z+f(u)-\sigma v\\
v_t&=v_{zz}+cv_z+g(v)+\alpha u.
\end{aligned}
\end{equation} The steady state equation is an ODE, which upon setting $u_z=\sigma w$ and $v_z=\alpha y$, can be written as the first order system \begin{equation}\label{traveling wave ODE}
\left(\begin{array}{c}
u\\
v\\
w\\
y
\end{array}\right)_z=\left(\begin{array}{c}
\sigma w\\
\alpha y\\
-cw-\frac{f(u)}{\sigma}+v\\
-cy-u-\frac{g(v)}{\alpha}
\end{array}\right).
\end{equation}
In an abuse of notation, we will write $\varphi(z)$ for the solution of interest both of (\ref{traveling wave PDE}) (i.e. $\varphi=(\hat{u},\hat{v})$) and of (\ref{traveling wave ODE}) (i.e. $\varphi=(\hat{u},\hat{v},\hat{u}'/\sigma,\hat{v}'/\alpha)$). It will be clear from context which object is being referenced. Since $\hat{u},\hat{v}$ decay exponentially, one sees that $\varphi$ is a homoclinic orbit to $0=(0,0,0,0)$ for (\ref{traveling wave ODE}). The linearization of (\ref{traveling wave ODE}) about this fixed point is given by \begin{equation}\label{linearization about 0}
\left(\begin{array}{c}
\delta u\\ \delta v\\ \delta w\\ \delta y
\end{array}\right)=\left(\begin{array}{c c c c }
0 & 0 & \sigma & 0\\
0 & 0 & 0 & \alpha\\
-\frac{f'(0)}{\sigma} & 1 & -c & 0\\
-1 & -\frac{g'(0)}{\alpha} & 0 & -c
\end{array}\right)\left(\begin{array}{c}
\delta u\\ \delta v\\ \delta w\\ \delta y
\end{array}\right).
\end{equation} Let $\nu_{1,2}$ be the eigenvalues of $DF(0)$--both of which have negative real part in light of (\ref{Turing assumptions}). Writing the matrix in (\ref{linearization about 0}) in block form, it is a simple calculation to see that its eigenvalues are given by \begin{equation}\label{evals of 0}
\begin{aligned}
\mu_1(0) & = -\frac{c}{2}-\frac{1}{2}\sqrt{c^2-4\nu_1}\\
\mu_2(0) & = -\frac{c}{2}-\frac{1}{2}\sqrt{c^2-4\nu_2}\\
\mu_3(0) & = -\frac{c}{2}+\frac{1}{2}\sqrt{c^2-4\nu_2}\\
\mu_4(0) & = -\frac{c}{2}+\frac{1}{2}\sqrt{c^2-4\nu_1}.
\end{aligned}
\end{equation} The argument $0$ is included in anticipation of extending this calculation to other values of the spectral parameter $\lambda$. It is clear that $\text{Re } \mu_{1,2}(0)<0$ and that $\text{Re } \mu_{3,4}(0)>0$, hence $0$ is a hyperbolic fixed point with two-dimensional stable and unstable manifolds, $W^s(0)$ and $W^u(0)$. We denote by $V^{s/u}(0)$ the stable and unstable subspaces of the linearization (\ref{linearization about 0}). Thus \begin{equation}
V^s(0)=T_0W^s(0),\hspace{.2 in} V^u(0)=T_0 W^u(0).
\end{equation} Our assumptions then guarantee that $\varphi$ lies in the intersection of $W^s(0)$ and $W^u(0)$. We make the following additional assumptions about $\varphi$. \begin{enumerate}[label=(A\arabic*)]
\item $c<0$. That is, the wave propagates to the left.
\vspace{.1 in}
\item The tails of $\varphi$ are monotone, as opposed to oscillatory. From (\ref{evals of 0}), we see that this is equivalent to assuming that $\nu_1$ and $\nu_2$ are real. Additionally, we assume that $\nu_1$ and $\nu_2$ are simple and satisfy \begin{equation}
\nu_1<\nu_2<0.
\end{equation}
\item Assumption (A2) guarantees that $\mu_1(0)<\mu_2(0)<0<\mu_3(0)<\mu_4(0)$, so that the leading eigenvalues $\mu_2(0)$ and $\mu_3(0)$ are real and simple. We assume that the exponential decay rate of $\varphi$ (as a homoclinic orbit) is given by $\mu_2(0)$ in forwards time and by $\mu_3(0)$ in backwards time. This assumption is generic, c.f. \S 2.1 of \cite{HomSan10}.
\vspace{.1 in}
\item $\varphi$ is \emph{transversely constructed}. This means that $(\varphi(z),c)\in\bbR^5$ is given by the transverse intersection of the center-unstable and center-stable manifolds of the fixed point $(0,0,0,0,c)$ for (\ref{traveling wave ODE}) with the equation $c'=0$ appended.
\end{enumerate}

Assumption (A1) is not essential; all of the arguments of this paper go through \emph{mutatis mutandis} if the speed is taken instead to be positive. The reason for assumptions (A2) and (A3) is that some of our proofs (e.g. Lemma \ref{Lemma D prime}) use the decay properties of the wave. Our results could be extended to the case where $\varphi$ has oscillatory tails by replacing $\mu_i$ with $\text{Re } \mu_i$ in most proofs. In this case, $\nu_1$ and $\nu_2$ are complex conjugates, and consequently the stable and unstable eigenvalues of (\ref{linearization about 0}) come in conjugate pairs as well. Likewise, if $\varphi$ is in an orbit-flip configuration \cite{HomSan10}, then the proofs could be altered accordingly, provided that the decay properties are known. Thus assumptions (A1)-(A3) are mostly for notational convenience. However, if $\nu_1=\nu_2$, then we have $\mu_1(0)=\mu_2(0)$ and $\mu_3(0)=\mu_4(0)$. This causes trouble for analytically choosing bases for $E^{u/s}(\lambda,z)$ \cite{BD03}, so we assume that this is not the case. We remark that the simplicity of the eigenvalues $\mu_i(0)$ is generic vis-\`{a}-vis $\sigma$ and $\alpha$. Finally, we impose (A4) because $D'(0)$ is known to vanish if the wave is \emph{not} constructed in this manner, c.f. pp. 57-60 of \cite{AJ94}. This assumption is therefore natural if one wishes to use the Maslov index to say something about $\mathrm{sign}\,D'(0)$.

With the assumptions about $\varphi$ in place, we turn to the stability question. As explained in \S 2.A of \cite{AGJ}, equation (\ref{traveling wave PDE}) can be solved for small $t$ in the space $BU(\bbR,\bbR^2)$ of bounded, uniformly continuous functions on $\bbR$, so this is a natural space to consider for the stability analysis. In that work, assumptions are also made on the decay rates of the wave in its tails. Those assumptions are satisfied here, since $0$ is a hyperbolic equilibrium for (\ref{traveling wave ODE}), as was shown above. We use the following definition for nonlinear stability. \begin{define}
The traveling wave $\vp(z)$ is \textbf{asymptotically stable} relative to (\ref{traveling wave PDE}) if there is a neighborhood $V\subset BU(\bbR,\bbR^2)$ of $\vp(z)$ such that if $u(z,t)$ solves (\ref{traveling wave PDE}) with $u(z,0)\in V$, then \[||\vp(z+k)-u(z,t)||_\infty\rightarrow 0 \] as $t\rightarrow\infty$ for some $k\in\bbR$.
\end{define} As advertised, it suffices to prove that the wave is spectrally stable. We linearize (\ref{traveling wave PDE}) about the traveling wave $\vp$ to obtain \begin{equation}\label{defn of L}
P_t=P_{zz}+cP_z+\left(\begin{array}{c c}
f'(\hat{u}) & -\sigma\\
\alpha & g'(\hat{v})
\end{array}\right)P,
\end{equation} with $P=(p,q)^T\in BU(\bbR,\bbR^2)$. The right-hand-side of (\ref{defn of L}) defines the operator $L$ through its action on $P$. The spectrum $\sigma(L)$ of $L$ is then broken into two parts. First, $\lambda\in\bbC$ is an eigenvalue for $L$ if there exists a bounded solution $P$ to the equation \begin{equation}\label{eval equation}
LP=\lambda P.
\end{equation} The set of eigenvalues of finite multiplicity is called the point spectrum $\sigma_n(L)$ of $L$, and the complement of this set in $\sigma(L)$ is called the essential spectrum $\sigma_\text{ess}(L)$. Equation (\ref{eval equation}) is a second-order system of two equations, which can be converted to a four-dimensional first-order system using the standard trick. This allows us to analyze the eigenvalue problem using geometric dynamical systems methods. To that end, set $p_z=\sigma r$ and $q_z=\alpha s$ to get \begin{equation}\label{eval eqn matrix}
\left(\begin{array}{c}
p\\
q\\
r\\
s
\end{array}\right)_z=\left(\begin{array}{c c c c}
0 & 0 & \sigma & 0\\
0 & 0 & 0 & \alpha\\
\frac{\lambda}{\sigma}-\frac{f'(\hat{u})}{\sigma} & 1 & -c & 0\\
-1 & \frac{\lambda}{\alpha}-\frac{g'(\hat{v})}{\alpha} & 0 & -c
\end{array}\right)\left(\begin{array}{c}
p\\
q\\
r\\
s
\end{array}\right).
\end{equation} This is now in the familiar form \begin{equation}\label{eval eqn 1st order}
Y'=A(\lambda,z)Y,
\end{equation} where $Y(\lambda,z)=(p,q,r,s)\in\bbC^4$ and $A(\lambda,z)\in M_4(\bbC)$ is the complex matrix \begin{equation}\label{matrix A defn}
A(\lambda,z)=\left(\begin{array}{c c c c}
0 & 0 & \sigma & 0\\
0 & 0 & 0 & \alpha\\
\frac{\lambda}{\sigma}-\frac{f'(\hat{u})}{\sigma} & 1 & -c & 0\\
-1 & \frac{\lambda}{\alpha}-\frac{g'(\hat{v})}{\alpha} & 0 & -c
\end{array}\right).
\end{equation} Since $(\hat{u},\hat{v})\rightarrow(0,0)$ exponentially as $z\rightarrow\pm\infty$, it follows that \begin{equation} A(\lambda,z)\rightarrow A_\infty(\lambda)=\left(\begin{array}{c c c c}
0 & 0 & \sigma & 0\\
0 & 0 & 0 & \alpha\\
\frac{\lambda}{\sigma}-\frac{a}{\sigma} & 1 & -c & 0\\
-1 & \frac{\lambda}{\alpha}-\frac{b}{\alpha} & 0 & -c
\end{array}\right),\end{equation} where $a=f'(0)$ and $b=g'(0)$, and this convergence is exponential as well. Observe that one obtains (\ref{linearization about 0}) by setting $\lambda=0$ in $A_\infty(\lambda)$. More generally, (\ref{eval eqn 1st order}) with $\lambda=0$ is the variational equation for (\ref{traveling wave ODE}) along $\varphi$. This is a major motivation for studying the Maslov index in this context. We will elaborate on this in \S 6.

The asymptotic matrix $A_\infty(\lambda)$ plays a prominent roll in the behavior of solutions to (\ref{eval eqn 1st order}). In particular, if $A_\infty(\lambda)$ has no purely imaginary eigenvalues, then (\ref{eval eqn 1st order}) admits an exponential dichotomy \cite{Sandstede02}, and hence we can pick out solutions that decay at either plus or minus infinity. The essential spectrum of $L$ is precisely the set of $\lambda$ for which $A_\infty(\lambda)$ has eigenvalues of zero real part, c.f. Lemma 3.1.10 of \cite{KP13}. Fortunately, in the activator-inhibitor case this set is bounded away from the imaginary axis in the left half-plane.
\begin{lemma}\label{essential spectrum}
There exists $K<0$ such that for the operator $L$ defined by (\ref{defn of L}), we have \begin{equation}
\sigma_\text{ess}(L)\subset \mathcal{K}:=\{z\in\bbC:\text{Re }z\leq K \}.
\end{equation} Furthermore, $A_\infty(\lambda)$ has exactly two eigenvalues of positive real part and two of negative real part for all $\lambda\in\bbC\setminus \mathcal{K}.$
\end{lemma}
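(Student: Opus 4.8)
The plan is to make everything explicit by computing the spectrum of the constant matrix $A_\infty(\lambda)$ directly, since the essential spectrum has already been identified with the set of $\lambda$ for which $A_\infty(\lambda)$ has a purely imaginary eigenvalue (Lemma 3.1.10 of \cite{KP13}). First I would look for solutions $Y=e^{\mu z}Y_0$ of the constant-coefficient system $Y'=A_\infty(\lambda)Y$. Writing $Y_0=(p,q,r,s)$, the first two rows force $r=\mu p/\sigma$ and $s=\mu q/\alpha$, and substituting these into the last two rows gives
\begin{equation}\label{disp pair}
(\mu^2+c\mu+a-\lambda)\,p=\sigma q,\qquad (\mu^2+c\mu+b-\lambda)\,q=-\alpha p.
\end{equation}
A nonzero solution requires $p,q\neq 0$ (if either vanishes, so does the other, forcing $Y_0=0$), so multiplying the two equations in \eqref{disp pair} yields the dispersion relation
\begin{equation}\label{disp rel}
(\mu^2+c\mu+a-\lambda)(\mu^2+c\mu+b-\lambda)+\sigma\alpha=0.
\end{equation}
Setting $\zeta=\mu^2+c\mu-\lambda$, the left-hand side of \eqref{disp rel} is $\zeta^2+(a+b)\zeta+(ab+\sigma\alpha)$, whose roots are $\zeta=-\nu_1$ and $\zeta=-\nu_2$ because $\nu_{1,2}$ solve $\nu^2-(a+b)\nu+(ab+\sigma\alpha)=0$. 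Hence the four eigenvalues of $A_\infty(\lambda)$ are the roots of $\mu^2+c\mu-(\lambda-\nu_j)=0$, $j=1,2$, namely $\mu=-\tfrac c2\pm\tfrac12\sqrt{c^2+4(\lambda-\nu_j)}$, which recovers \eqref{evals of 0} at $\lambda=0$.

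Next I would read off where $A_\infty(\lambda)$ fails to be hyperbolic: it has an eigenvalue $\mu=ik$ with $k\in\bbR$ exactly when $(ik)^2+c(ik)-(\lambda-\nu_j)=0$ for some $j$, i.e. when $\lambda=\nu_j-k^2+ick$. Since $\nu_1<\nu_2<0$ are real by (A2), this forces $\mathrm{Re}\,\lambda=\nu_j-k^2\leq\nu_2<0$. Taking $K:=\nu_2<0$, the matrix $A_\infty(\lambda)$ is then hyperbolic for every $\lambda$ in the open half-plane $\bbC\setminus\mathcal{K}=\{\mathrm{Re}\,\lambda>K\}$, so $\sigma_\text{ess}(L)\subset\mathcal{K}$ by the characterization cited above. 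For the count, I would use that $\bbC\setminus\mathcal{K}$ is a half-plane, hence connected, and that $A_\infty(\lambda)$ is hyperbolic throughout it; therefore the number of eigenvalues of $A_\infty(\lambda)$ with positive real part is locally constant, hence constant, on $\bbC\setminus\mathcal{K}$. Evaluating at $\lambda=0\in\bbC\setminus\mathcal{K}$, where \eqref{evals of 0} gives $\mathrm{Re}\,\mu_{1,2}(0)<0<\mathrm{Re}\,\mu_{3,4}(0)$, shows that exactly two eigenvalues have positive real part and two have negative real part for all $\lambda\in\bbC\setminus\mathcal{K}$.

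I do not anticipate a genuine obstacle here: the only real content is the algebraic reduction \eqref{disp pair}--\eqref{disp rel}, in which the block structure of $A_\infty(\lambda)$ makes its characteristic polynomial factor through the eigenvalues $\nu_{1,2}$ of $DF(0)$. The two points needing a little care are invoking the correct Fredholm/essential-spectrum dichotomy from the literature and noting that $\bbC\setminus\mathcal{K}$ is connected, so the unstable dimension of $A_\infty(\lambda)$ cannot jump across it. (If one drops (A2) and uses only \eqref{Turing assumptions}, the same argument goes through with $K=\max\{\mathrm{Re}\,\nu_1,\mathrm{Re}\,\nu_2\}<0$.)
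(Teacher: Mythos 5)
Your proposal is correct, but it reaches the lemma by a somewhat different route than the paper. You reduce the eigenvector equation for $A_\infty(\lambda)$ to a quartic dispersion relation and factor it through the eigenvalues $\nu_{1,2}$ of $DF(0)$, which gives the explicit formulas (\ref{evals of A(lambda)}) for all complex $\lambda$ and hence an explicit description of the non-hyperbolicity set as the two left-opening parabolas $\lambda=\nu_j-k^2+ick$; this yields the concrete bound $K=\nu_2$ (or $K=\max_j\mathrm{Re}\,\nu_j$ without (A2)) with no appeal to sectoriality. The paper instead writes down the characteristic polynomial (\ref{char poly}), substitutes $t=ik$, $\lambda=x+iy$, separates real and imaginary parts, and uses only the Turing sign conditions (\ref{Turing assumptions}) to rule out purely imaginary roots for $x\geq 0$, then invokes sectoriality of $L$ (via \cite{Henry}) to upgrade ``$\sigma_{\text{ess}}$ disjoint from the closed right half-plane'' to containment in $\{\mathrm{Re}\,z\leq K\}$ for some $K<0$. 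Both arguments rest on the same characterization of $\sigma_{\text{ess}}$ through hyperbolicity of $A_\infty(\lambda)$ (Lemma 3.1.10 of \cite{KP13}) and the same continuity-plus-connectedness argument for the two--two eigenvalue split, with you anchoring the count at $\lambda=0$ via (\ref{evals of 0}) and the paper at real $\lambda\gg 1$. What your route buys is an explicit $K$ and, as a byproduct, a derivation of (\ref{evals of A(lambda)}), which the paper later states without proof; what the paper's route buys is that it works directly from (\ref{Turing assumptions}) without the factorization (your parenthetical covers the complex-$\nu_j$ case, so this is not a substantive difference). One small point of rigor: your computation shows every eigenvalue of $A_\infty(\lambda)$ satisfies the dispersion relation, and to assert that the four eigenvalues \emph{are} exactly the roots $-\tfrac c2\pm\tfrac12\sqrt{c^2+4(\lambda-\nu_j)}$ you should note that the dispersion polynomial is monic of degree four and coincides with the characteristic polynomial (\ref{char poly}) upon expansion; for the argument itself, however, only the containment direction (eigenvalue implies dispersion relation) is needed, so this is a remark rather than a gap.
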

\begin{proof}
Since $L$ is known to be sectorial \cite{Henry}, it suffices to show that $\sigma_{\text{ess}}$ is disjoint from the closed right half-plane. For fixed $\lambda\in\bbC$, the characteristic polynomial of $A_\infty(\lambda)$ is \begin{equation}\label{char poly}
\chi_{A_\infty(\lambda)}(t)=t^4+2ct^3+(c^2+a+b-2\lambda)t^2+c(a+b-2\lambda)t+ab-\lambda(a+b)+\lambda^2+\sigma\alpha.
\end{equation}
Set $\lambda=x+iy$. Then $\lambda\in\sigma_{\text{ess}(L)}$ if and only if $A_\infty(\lambda)$ has at least one purely imaginary eigenvalue, which is to say that $\chi_{A_\infty(\lambda)}(ik)=0$ for some $k\in\bbR$. We substitute these values into $\chi_{A_\infty(\lambda)}$ and collect the real and imaginary parts: \begin{equation}\label{char poly reim}
\begin{aligned}
\text{Re}\,\chi_{A_\infty(\lambda)}(ik) & =\left(x+k^2-\left(\frac{a+b}{2}\right)\right)^2-(y-ck)^2-\frac{1}{4}\left(a-b\right)^2+\sigma\alpha\\
\text{Im}\,\chi_{A_\infty(\lambda)}(ik) & =(ck-y)(-2k^2+a+b-2x).
\end{aligned}
\end{equation}
From the first equation in (\ref{Turing assumptions}), it follows that for $x\geq0$, the second equation in (\ref{char poly reim}) vanishes only when $y=ck$. Substituting this value into the first equation, we end up with \begin{equation}
\text{Re}\chi_{A_\infty(x+ick)}(ik)=k^4-(a+b-2x)k^2+x^2-(a+b)x+\alpha\sigma+ab.
\end{equation} This even quartic has a real solution $k$ for nonnegative $x$ if and only if the constant term is nonpositive. But the constant term is a quadratic in $x$, which is positive for $x\geq 0$, by (\ref{Turing assumptions}). The second claim follows from examining (\ref{matrix A defn}) for real $\lambda\gg 1$ and noticing that changes in the eigenvalue split occur only when $\lambda$ crosses $\sigma_\text{ess}$.
\end{proof}

The preceding lemma guarantees that we can define the Evans function \cite{AGJ,Jones84,KP13,Sandstede02} in an open, simply connected domain $\mathcal{U}\subset\bbC$ containing the closed right half-plane $\bar{\bbC}^+$. Before doing so, consider $\lambda\in\bbR$. An analogous calculation to (\ref{evals of 0}) shows that the eigenvalues of $A_\infty(\lambda)$ are \begin{equation}\label{evals of A(lambda)}
\begin{aligned}
\mu_1(\lambda) & = -\frac{c}{2}-\frac{1}{2}\sqrt{c^2+4(\lambda-\nu_1)}\\
\mu_2(\lambda) & = -\frac{c}{2}-\frac{1}{2}\sqrt{c^2+4(\lambda-\nu_2)}\\
\mu_3(\lambda) & = -\frac{c}{2}+\frac{1}{2}\sqrt{c^2+4(\lambda-\nu_2)}\\
\mu_4(\lambda) & = -\frac{c}{2}+\frac{1}{2}\sqrt{c^2+4(\lambda-\nu_1)}.
\end{aligned}
\end{equation} As long as $\lambda>\nu_2$, the $\mu_i$ are all real and simple, by (A2). Let $\delta>0$ be so that this is the case on the interval \begin{equation}\label{I defn}
I=[-\delta,\infty)\subset\bbR\cap\mathcal{U}.
\end{equation} It is then known (e.g. page 56 of \cite{PW}) that there exist solutions $u_i(\lambda,z)$ to (\ref{eval eqn 1st order}) satisfying \begin{equation}\label{individual soln decay}
\begin{aligned}
\lim\limits_{z\rightarrow\infty}e^{-\mu_i(\lambda)z}u_i(\lambda,z) & =\eta_i(\lambda), \hspace{.2 in}i=1,2\\
\lim\limits_{z\rightarrow-\infty}e^{-\mu_i(\lambda)z}u_i(\lambda,z) & = \eta_i(\lambda), \hspace{.2 in}i=3,4,
\end{aligned}
\end{equation} where $\eta_i(\lambda)$ is a nonzero eigenvector of $A_\infty(\lambda)$ corresponding to eigenvalue $\mu_i(\lambda)$. Furthermore, the $u_i$ are analytic in $\lambda$, and the limits are achieved uniformly on compact subsets of $I$. Following \S 3 of \cite{AGJ}, we then define $\lambda$- and $z$-dependent real vector spaces \begin{equation}\label{un/stable bundles}
\begin{aligned}
E^s(\lambda,z) & =\mathrm{sp}\{u_1(\lambda,z),u_2(\lambda,z)\}\\
E^u(\lambda,z) & =\mathrm{sp}\{u_3(\lambda,z),u_4(\lambda,z)\}.
\end{aligned}
\end{equation} We call $E^s$ the \emph{stable bundle} and $E^u$ the \emph{unstable bundle}. These sets could be defined for complex $\lambda\in\mathcal{U}$ as well, but this work is focused on $\lambda=0$, so it is unnecessary to do so. Instead, we will define the Evans function on the rest of $\mathcal{U}$ using the exterior powers $\bigwedge^k\bbC^4$ of $\bbC^4$. 

System (\ref{eval eqn 1st order}) induces an equation \begin{equation}\label{derivation on ext alg}Z'=A^{(2)}(\lambda,z)Z,
\end{equation} on $\bigwedge^2\bbC^4$. Explicitly, $A^{(2)}(\lambda,z)$ is the unique endomorphism of $\bigwedge^2\bbC^4$ satisfying \begin{equation}\label{induced eqn specific} A^{(2)}(\lambda,z)(Y_1\wedge Y_2)=A(\lambda,z)Y_1\wedge Y_2+Y_1\wedge A(\lambda,z)Y_2\end{equation} for any $Y_{1,2}\in \bbC^4$. It follows that if $Y_i$ are solutions to (\ref{eval eqn 1st order}), then $Y_1\wedge Y_2$ is a solution to (\ref{derivation on ext alg}). The eigenvalues of the matrix $A_\infty^{(2)}(\lambda)=\displaystyle\lim\limits_{z\rightarrow\pm}A^{(2)}(\lambda,z)$ are the pairwise sums of eigenvalues of $A_\infty(\lambda)$, so for any $\lambda\in\mathcal{U}$ we have a simple eigenvalue of largest (positive) real part and a simple eigenvalue of least (negative) real part, by Lemma \ref{essential spectrum}. More specifically, $\mu_3(\lambda)+\mu_4(\lambda)$ and $\mu_1(\lambda)+\mu_2(\lambda)$ are the eigenvalues of $A_\infty^{(2)}(\lambda)$ of largest and smallest real part respectively. Let $\zeta_s(\lambda)\in\bigwedge^2\bbC^4$ and $\zeta_u(\lambda)\in\bigwedge^2\bbC^4$ be eigenvectors corresponding to $\mu_1(\lambda)+\mu_2(\lambda)$ and $\mu_3(\lambda)+\mu_4(\lambda)$ respectively. It follows from \S II.4.2 of \cite{Kato} that $\zeta_{s/u}(\lambda)$ can be chosen analytically in $\lambda$.

As in \S 4 of \cite{AGJ}, we have solutions $\tilde{E}^s(\lambda,z)$ and $\tilde{E}^u(\lambda,z)$  to (\ref{derivation on ext alg}) satisfying \begin{equation}\label{soln decay}
\begin{aligned}
e^{-\left(\mu_1(\lambda)+\mu_2(\lambda)\right)z}\tilde{E}^s(\lambda,z)\rightarrow\zeta_s(\lambda) & \text{ as } z\rightarrow\infty\\
e^{-\left(\mu_3(\lambda)+\mu_4(\lambda) \right)z}\tilde{E}^u(\lambda,z)\rightarrow\zeta_u(\lambda) & \text{ as } z\rightarrow-\infty,
\end{aligned}
\end{equation} and these are unique up to scalar multiplication. We shall refer to $\tilde{E}^s(\lambda,z)$ and $\tilde{E}^u(\lambda,z)$ as the stable and unstable two-vectors respectively. The wedge product $\tilde{E}^s(\lambda,z)\wedge \tilde{E}^u(\lambda,z)$ is in the one-dimensional space $\bigwedge^4\bbC^4$, on which (\ref{eval eqn 1st order}) induces the equation \begin{equation}
W'=\text{Trace}(A(\lambda,z))W,
\end{equation} see \S 4 of \cite{AGJ}. Furthermore, it is shown in \cite{AGJ} that $\tilde{E}^{s}(\lambda,z)\wedge\tilde{E}^u(\lambda,z)$ solves this equation. Since $\text{Trace}(A(\lambda,z))\equiv -2c$, it follows that \begin{equation}\label{Evans fct wedge}
\tilde{D}(\lambda)=e^{2cz}\tilde{E}^s(\lambda,z)\wedge \tilde{E}^u(\lambda,z)
\end{equation}is independent of $z$. This is the Evans function, as defined in \cite{AGJ}. Since $\bigwedge^4\bbC^4$ is only one-dimensional, we know that $\tilde{D}(\lambda)$ is a scalar multiple of the volume element \begin{equation}
\mathrm{vol}^*=e_1\wedge e_2\wedge e_3\wedge e_4,
\end{equation} where $\{e_i\}_{i=1}^4$ is the standard basis of $\bbC^4$. For matters of convenience, we define the Evans function to be the scalar part of the wedge product in (\ref{Evans fct wedge}). \begin{define}
The \textbf{Evans function} $D(\lambda)$ is defined by \begin{equation}\label{Evans fct defn}
D(\lambda)\mathrm{vol}^*=e^{2cz}\tilde{E}^s(\lambda,z)\wedge \tilde{E}^u(\lambda,z).
\end{equation} 
\end{define}
This function has the properties of the Evans function outlined in the introduction. In particular, it is analytic on $\mathcal{U}$ and vanishes for all values of $\lambda$ that are eigenvalues of $L$, with the order of the zero giving the algebraic multiplicity of the eigenvalue. The reason that the Evans function detects eigenvalues is that the two-vectors $\tilde{E}^s(\lambda,z)$ and $\tilde{E}^u(\lambda,z)$ capture--via (\ref{derivation on ext alg})--the solutions to (\ref{eval eqn 1st order}) that decay in forwards and backwards time respectively. The key to making this rigorous is the Pl\"{u}cker embedding, which we discuss in the next section.

We conclude this section by pointing out that $D(\lambda)$ is real-valued for $\lambda\in\bbR$, since $\tilde{E}^{s/u}(\lambda,z)$ are solutions of an ODE with real coefficients in that case. Furthermore, since eigenvectors are only defined up to a scalar multiple, we fix an orientation by demanding that \begin{equation}\label{orientation}
\zeta_s(0)\wedge\zeta_u(0)=\rho\,\mathrm{vol}^*,\hspace{.2 in}\rho>0.
\end{equation} Under this assumption, it is known (c.f. Lemma 4.2 of \cite{Yan02} and Lemma 4.2 of \cite{AJ94}) that \begin{equation}
D(\lambda)>0 \text{ for } \lambda\gg 1.
\end{equation}
\section{Symplectic Structure and the Pl\"{u}cker Coordinates}
In the previous section, the stable and unstable two-vectors were defined as elements in the 2nd exterior power $\bigwedge^2\bbC^4$. Via the Pl\"{u}cker embedding, they can also be related to the stable and unstable bundles defined by (\ref{un/stable bundles}). For the rest of the paper, we focus exclusively on $\lambda\in\bbR$. Let $\{e_i\}_{i=1}^4$ be the standard basis for $\bbR^4$. To any plane $V=\text{sp}\{u,v\}\in\text{Gr}_2(\bbR^4)$ ($u=\sum_{i=1}^{4}u_ie_i$, e.g.) we can associate a two vector $u\wedge v$, which can be expressed in terms of the basis $\{e_i\wedge e_j\}$ of $\bigwedge^2\bbR^4$. Explicitly, the $e_i\wedge e_j-$coordinate--call it $p_{ij}$--is given by \begin{equation}\label{Plucker coord defn}
p_{ij}=\left|\begin{array}{c c}
u_i & v_i\\
u_j & v_j
\end{array}\right|.
\end{equation} 
Choosing a different basis of $V$ would change this two-vector by a non-zero constant multiple, so by projectivizing, one obtains a well-defined map $j:\text{Gr}_2(\bbR^4)\rightarrow\mathbb{P}(\bigwedge^2\bbR^4)$. It can be shown \cite{Hassett} that this map is an embedding, and its image is a projective variety. For any plane $V$, we call $j(V)=(p_{12},p_{13},p_{14},p_{23},p_{24},p_{34})$ the \emph{Pl\"{u}cker coordinates} for $V$. These coordinates are really projective, but it will nonetheless be useful to think of a plane as a vector in $\mathbb{R}^6$. Finally, it is important to note that the map $j$ is not surjective--it is explained in \cite{Hassett} that only points satisfying the \emph{Grassmannian condition} \begin{equation}\label{Grassmannian condition}
p_{12}p_{34}-p_{13}p_{24}+p_{14}p_{23}=0
\end{equation} are in the image of the embedding.

To relate the (un)stable two-vector with the (un)stable bundle, observe that \begin{equation}\label{basis of bundles 1}
\begin{aligned}
u_1(\lambda,z)\wedge u_2(\lambda,z)\\
u_3(\lambda,z)\wedge u_4(\lambda,z)
\end{aligned}\hspace{.1 in} \text{ solve } (\ref{derivation on ext alg}).
\end{equation} Furthermore, we compute from (\ref{individual soln decay}) that \begin{equation}\label{basis of bundles 2}
\begin{aligned}
\lim\limits_{z\rightarrow\infty}e^{-(\mu_1(\lambda)+\mu_2(\lambda))z}u_1(\lambda,z)\wedge u_2(\lambda,z)=\eta_1(\lambda)\wedge\eta_2(\lambda)\\
\lim\limits_{z\rightarrow-\infty}e^{-(\mu_3(\lambda)+\mu_4(\lambda))z}u_3(\lambda,z)\wedge u_4(\lambda,z)=\eta_3(\lambda)\wedge\eta_4(\lambda).
\end{aligned}
\end{equation} The two-vectors $\eta_1(\lambda)\wedge\eta_2(\lambda)$ and $\eta_3(\lambda)\wedge\eta_4(\lambda)$ are necessarily nonzero, since eigenvectors for different eigenvalues are linearly independent. On the other hand, we know from (\ref{soln decay}) that the only two-vectors satisfying (\ref{basis of bundles 1}) and (\ref{basis of bundles 2}) are proportional to $\tilde{E}^{s/u}(\lambda,z)$. It follows that for any $z\in\bbR$ and $\lambda\in I$, \begin{equation}
\begin{aligned}
j(E^s(\lambda,z)) & =[\tilde{E}^s(\lambda,z)]\\
j(E^u(\lambda,z)) & =[\tilde{E}^u(\lambda,z)],
\end{aligned}\end{equation} where $[\cdot]$ denotes the equivalence class in projective coordinates. This correspondence is important, since making use of the symplectic structure requires being able to distinguish different solutions in $E^{s/u}(\lambda,z)$. On that note, recall that \begin{equation}
\vp'(z)=(\hat{u}',\hat{v}',\hat{u}''/\sigma,\hat{v}''/\alpha)\in E^s(0,z)\cap E^u(0,z)
\end{equation} for all $z\in\bbR$ due to translation invariance, and the above intersection is one-dimensional by (A4). According to (A3), the decay rates of $\vp'$ are given by $\mu_2(0)$ in forwards time and $\mu_3(0)$ in backwards time. By Lemma 2.2 of \cite{GZ98}, we are free to take $\vp'(z)$ as the basis vector $u_2(0,z)$ (resp. $u_3(0,z)$) of $E^s(0,z)$ (resp. $E^u(0,z)$). We can then choose $u_{1/4}(0,z)$ so that the limits in (\ref{basis of bundles 2}) are exactly $\zeta_{s/u}(0)$, with the orientation given by (\ref{orientation}). In other words, we have \begin{equation}\label{large lambda individual}
\eta_1(0)\wedge\eta_2(0)\wedge\eta_3(0)\wedge\eta_4(0)=\zeta_s(0)\wedge\zeta_u(0)=\rho\,\mathrm{vol}^*.
\end{equation}This choice propagates to bases of $E^{s/u}(\lambda,z)$ (and hence to $\tilde{E}^{s/u}(\lambda,z)$ through the Pl\"{u}cker map) which agree with (\ref{soln decay}).

In what follows, we freely identify planes with their images under $j$. Before proceeding, it will be helpful to write down the matrix for $A^{(2)}$ using Pl\"{u}cker coordinates. Let $Y_1=(p,q,r,s)$ and $Y_2=(\tilde{p},\tilde{q},\tilde{r},\tilde{s})$ be solutions of (\ref{eval eqn 1st order}), such that $j(\text{sp}\{Y_1,Y_2\})=(p_{ij}).$ Then, using (\ref{eval eqn 1st order}) and (\ref{Plucker coord defn}), \[p_{12}'=(p\tilde{q}-\tilde{p}q)'=\sigma r\tilde{q}+\alpha p\tilde{s}-\sigma \tilde{r}q-\alpha \tilde{p}s=-\sigma(q\tilde{r}-r\tilde{q})+\alpha(p\tilde{s}-\tilde{p}s)=-\sigma p_{23}+\alpha p_{14}.\] The other derivatives are computed similarly, and we end up with \begin{equation}\label{Plucker matrix}
\left(\begin{array}{c}
p_{12}\\
p_{13}\\
p_{14}\\
p_{23}\\
p_{24}\\
p_{34}
\end{array}\right)'=
\left(\begin{array}{cccccc} 0&0&\alpha&-\sigma&0&0
\\ \noalign{\medskip}1&-c&0&0&0&0\\ \noalign{\medskip}\frac{\lambda-g'(\hat{v})}{\alpha} & 0 & -c & 0 & 0 & \sigma\\ \noalign{\medskip}\frac{f'(\hat{u})-\lambda}{\sigma}&0&0&-c&0&-{\alpha}
\\ \noalign{\medskip}1 & 0 & 0 & 0& -c & 0\\ \noalign{\medskip}0
& 1 & \frac{\lambda-f'(\hat{u})}{\sigma}& \frac{g'(\hat{v})-\lambda}{\alpha} & 1 & -2\,c \end{array}
 \right)\left(\begin{array}{c}
 p_{12}\\
 p_{13}\\
 p_{14}\\
 p_{23}\\
 p_{24}\\
 p_{34}
 \end{array}\right).
\end{equation}Now, one can verify directly that \begin{equation}
\frac{d}{dz}\left(p_{12}p_{34}-p_{13}p_{24}+p_{14}p_{23} \right)=0
\end{equation} for any nonzero solution $Y_1(\lambda,z)\wedge Y_2(\lambda,z)$ of (\ref{derivation on ext alg}). This is to be expected, since the flow of a linear ODE preserves the dimension of subspaces. Nonetheless, this suggests that a good way to find invariant subsets of $\text{Gr}_2(\bbR^4)$ for (\ref{derivation on ext alg}) is to find algebraic quantities that are preserved. For example, we have the following.  \begin{lemma}\label{Lag condition lma}
The set of planes satisfying $p_{13}-p_{24}=0$ is invariant under the flow of (\ref{derivation on ext alg}).
\end{lemma}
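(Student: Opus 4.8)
The plan is to show that the quantity $\beta := p_{13} - p_{24}$ satisfies a linear scalar ODE of the form $\beta' = h(\lambda,z)\,\beta$ along any solution of (\ref{derivation on ext alg}), from which it follows immediately that $\beta \equiv 0$ is preserved by the flow. The only tool needed is the explicit matrix in (\ref{Plucker matrix}).

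First I would read off from (\ref{Plucker matrix}) the equations for the two relevant coordinates:
\begin{equation}\label{p13p24derivs}
\begin{aligned}
p_{13}' &= p_{12} - c\,p_{13},\\
p_{24}' &= p_{12} - c\,p_{24}.
\end{aligned}
\end{equation}
Subtracting, the $p_{12}$ terms cancel exactly and we are left with
\begin{equation}\label{betaODE}
(p_{13} - p_{24})' = -c\,(p_{13}-p_{24}).
\end{equation}
Thus $\beta(z) = \beta(z_0)\,e^{-c(z-z_0)}$ along any solution, so if $p_{13}(z_0) - p_{24}(z_0) = 0$ at one point then $p_{13}(z) - p_{24}(z) = 0$ for all $z$. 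Since the set in question is exactly $\{\beta = 0\}$, it is invariant under the flow of (\ref{derivation on ext alg}). One should note that (\ref{Plucker matrix}) is the induced equation on all of $\bigwedge^2\bbR^4$, and a solution corresponds to a genuine plane only when the Grassmannian condition (\ref{Grassmannian condition}) holds; but the Grassmannian condition is itself preserved (as remarked in the excerpt), so the claim as stated about planes in $\text{Gr}_2(\bbR^4)$ follows by restricting to that invariant subvariety.

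There is essentially no obstacle here: the result is an immediate consequence of the fact that the $p_{13}$ and $p_{24}$ rows of the matrix in (\ref{Plucker matrix}) differ only in their diagonal entries (both equal to $-c$), with identical off-diagonal parts (a single $1$ in the $p_{12}$ column). The only thing to be careful about is citing (\ref{Plucker matrix}) correctly and making the one-line remark reconciling "solutions of (\ref{derivation on ext alg}) in $\bigwedge^2\bbR^4$" with "planes in $\text{Gr}_2(\bbR^4)$" via the invariance of the Grassmannian condition. If one wanted to emphasize the symplectic interpretation, one could observe that $\beta$ is (up to sign) the value on the plane of the two-form $e_1^* \wedge e_3^* - e_2^* \wedge e_4^*$ that will turn out to be the nonstandard symplectic form $\omega$ — but that identification is not needed for the proof of invariance itself and can be deferred.
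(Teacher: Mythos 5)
Your proposal is correct and follows essentially the same route as the paper: it reads off $p_{13}'=p_{12}-cp_{13}$ and $p_{24}'=p_{12}-cp_{24}$ from (\ref{Plucker matrix}), subtracts to obtain $(p_{13}-p_{24})'=-c(p_{13}-p_{24})$, and concludes invariance from uniqueness (equivalently, the explicit exponential solution). The added remark reconciling solutions in $\bigwedge^2\bbR^4$ with genuine planes via the Grassmannian condition is a harmless refinement the paper leaves implicit.
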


\begin{proof}
A direct computation gives \begin{equation}\label{omega deriv calc}
\begin{aligned}
(p_{13}-p_{24})' & = p_{12}-cp_{13}-(p_{12}-cp_{24})\\
& = -c(p_{13}-p_{24}).
\end{aligned}
\end{equation} The result follows from the uniqueness of solutions to ODEs.
\end{proof}
It is worth noting that equations like (\ref{Grassmannian condition}) and (\ref{omega deriv calc}) make sense because the polynomials are homogeneous. They therefore define projective varieties, which are identified with subsets of $\text{Gr}_2(\bbR^4)$ by $j$. The degree of the polynomial in Lemma \ref{Lag condition lma} is one, and it is called a linear line complex, see \S 1.11 of \cite{Duis}. It turns out that the subsets of $\text{Gr}_2(\bbR^4)$ cut out by a linear line complex have one of two different structures. To distinguish between them, it is useful to talk about differential forms.

Consider the dual space to $\bigwedge^2\bbR^4$, identified with $\bigwedge^2\bbR^4$ in the standard way. Elements of this space can be identified with two-forms on $\bbR^4$, see \cite{vinberg}, page 315. Explicitly, the element dual to $e_i\wedge e_j\in\bigwedge^2\bbR^4$ is identified with $de_i\wedge de_j$, which acts on two vectors in $\bbR^4$. We can therefore think of the Pl\"{u}cker embedding as identifying two-planes in $\bbR^4$ with differential two-forms (again, defined up to a nonzero multiple). With this interpretation, when $\lambda=0$, $\frac{d}{dz}$ becomes the Lie derivative of the form along the vector field tangent to the homoclinic orbit.

Since differential two-forms are skew-symmetric, any (non-trivial) two-form on $\bbR^4$ has either a two- or zero-dimensional kernel. In light of Lemma \ref{Lag condition lma}, we define \begin{equation}\label{omega defn}
\omega=de_1\wedge de_3-de_2\wedge de_4.
\end{equation} It is not difficult to see that $\omega$ is nondegenerate, hence it is a symplectic form. We can also describe the action of $\omega$ in terms of complex structures, see \cite{Duis}. Letting $\langle\cdot,\cdot\rangle$ denote the standard dot product on $\bbR^4$, we have \begin{equation}\label{omega defn complex structure}
\omega(a,b)=\langle a,Jb\rangle,
\end{equation} for any $a,b\in\bbR^4$. Here, $J$ is the complex structure \begin{equation}\label{J}
J=\left(\begin{array}{c c c c}
0 & 0 & 1 & 0\\
0 & 0 & 0 & -1\\
-1 & 0 & 0 & 0\\
0 & 1 & 0 & 0
\end{array}\right).
\end{equation} Formula (\ref{omega defn complex structure}) gives us an alternative way of proving Lemma \ref{Lag condition lma}. Let $u,v$ be linearly independent solutions of (\ref{eval eqn 1st order}) for fixed $\lambda\in\bbR$. Then \begin{equation}\label{Lemma pf 2}
\begin{aligned}
\frac{d}{dz}\omega(u,v) & = \omega(u_z,v)+\omega(u,v_z)\\
& = \omega(A(\lambda,z)u,v)+\omega(u,A(\lambda,z)v)\\
& = \langle A(\lambda,z)u,Jv\rangle+\langle u, JA(\lambda,z)v\rangle\\
& = \langle u,[A(\lambda,z)^TJ+JA(\lambda,z)]v\rangle.
\end{aligned}
\end{equation} The result would then follow from (\ref{omega defn complex structure}) if we could show that $A^TJ+JA=-cJ$. A simple calculation gives \begin{equation}
\begin{aligned}
A(\lambda,z)^TJ+JA(\lambda,z) & =
 \left(\begin{array}{c c c c}
-\frac{\lambda}{\sigma}+\frac{f'(\hat{u})}{\sigma} & -1 & 0 & 0\\
-1 & \frac{\lambda}{\alpha}-\frac{g'(\hat{v})}{\alpha} & 0 & 0\\
c & 0 & \sigma & 0\\
0 & -c & 0 & -\alpha
\end{array}\right)\\ & +\left(\begin{array}{c c c c}
\frac{\lambda}{\sigma}-\frac{f'(\hat{u})}{\sigma} & 1 & -c & 0\\
1 & -\frac{\lambda}{\alpha}+\frac{g'(\hat{v})}{\alpha} & 0 & c\\
0 & 0 & -\sigma & 0\\
0 & 0 & 0 & \alpha
\end{array}\right)\\
&=\left(\begin{array}{c c c c}
0 & 0 & -c & 0\\
0 & 0 & 0 & c\\
c & 0 & 0 & 0\\
0 & -c & 0 & 0
\end{array}\right)=-cJ,
\end{aligned}
\end{equation}
as desired. Now, using (\ref{omega defn complex structure}), it is not difficult to see that for any plane $V=(p_{ij})\in\text{Gr}_2(\bbR^4)$, we have \begin{equation}
p_{13}-p_{24}=0\iff \omega\vert_{V}=0.
\end{equation} It follows that the set $p_{13}-p_{24}=0$--which is invariant for the equation induced on $\Gr_2(\bbR^4)$ by (\ref{eval eqn 1st order})--is the set of Lagrangian planes for $\omega.$ A subspace $V\in\text{Gr}_2(\bbR^4)$ is called \emph{Lagrangian} if $\omega(v_1,v_2)=0$ for all $v_i\in V$. The set of Lagrangian planes is actually a differentiable manifold of dimension 3, called the Lagrangian Grassmannian $\Lambda(2)$, and it is a homogeneous space $U(2)/O(2)$. (See \cite{Arnold67}, for example.) It can be shown that $\pi_1(\Lambda(2))=\bbZ$, which allows us to define a winding number for curves in this space. This winding number is called the Maslov index, which will be discussed in \S 5. First, we prove the crucial result that the stable and unstable bundles are Lagrangian.
\begin{theorem}\label{un-stable bundles Lagrangian}
Both $E^s(\lambda,z)$ and $E^u(\lambda,z)$ are Lagrangian subspaces for all $\lambda$ and $z$.
\end{theorem}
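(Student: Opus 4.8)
The plan is to show directly that $\omega$ restricts to zero on each bundle, by evaluating $\omega$ on the distinguished bases $\{u_1,u_2\}$ and $\{u_3,u_4\}$ and playing the growth rate of $\omega$ under the flow against the exponential rates of the $u_i$. First I would record that, for any pair of linearly independent solutions $u,v$ of (\ref{eval eqn 1st order}) at a fixed $\lambda\in\bbR$, the computation in (\ref{Lemma pf 2}) together with the identity $A(\lambda,z)^TJ+JA(\lambda,z)=-cJ$ proved there gives $\tfrac{d}{dz}\omega(u(z),v(z))=-c\,\omega(u(z),v(z))$, and hence $\omega(u(z),v(z))=e^{-cz}\omega(u(0),v(0))$; in particular this pairing is either identically zero or nowhere zero.

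For the stable bundle, fix $\lambda\in I$ and set $\Omega(z)=\omega(u_1(\lambda,z),u_2(\lambda,z))$. By (\ref{individual soln decay}), $u_i(\lambda,z)=e^{\mu_i(\lambda)z}(\eta_i(\lambda)+o(1))$ as $z\to\infty$, so bilinearity and continuity of $\omega$ give $\Omega(z)=e^{(\mu_1(\lambda)+\mu_2(\lambda))z}\big(\omega(\eta_1(\lambda),\eta_2(\lambda))+o(1)\big)$ as $z\to\infty$. Comparing this with $\Omega(z)=e^{-cz}\Omega(0)$ from the first step yields $\Omega(0)=e^{(\mu_1(\lambda)+\mu_2(\lambda)+c)z}\big(\omega(\eta_1(\lambda),\eta_2(\lambda))+o(1)\big)$ for all large $z$. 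From (\ref{evals of A(lambda)}), $\mu_1(\lambda)+\mu_2(\lambda)+c=-\tfrac12\big(\sqrt{c^2+4(\lambda-\nu_1)}+\sqrt{c^2+4(\lambda-\nu_2)}\big)<0$ for $\lambda\in I$, so the right-hand side tends to $0$ as $z\to\infty$ (the parenthesized factor is bounded). Hence $\Omega(0)=0$, so $\Omega\equiv 0$, and since $\omega$ is alternating and $E^s(\lambda,z)=\mathrm{sp}\{u_1(\lambda,z),u_2(\lambda,z)\}$, this forces $\omega|_{E^s(\lambda,z)}=0$. The argument for $E^u$ is identical with the roles of $\pm\infty$ exchanged: using the $z\to-\infty$ limits in (\ref{individual soln decay}) and the rate $\mu_3(\lambda)+\mu_4(\lambda)+c=\tfrac12\big(\sqrt{c^2+4(\lambda-\nu_2)}+\sqrt{c^2+4(\lambda-\nu_1)}\big)>0$, so that $e^{(\mu_3(\lambda)+\mu_4(\lambda)+c)z}\to 0$ as $z\to-\infty$, the same reasoning gives $\omega(u_3(\lambda,z),u_4(\lambda,z))\equiv 0$, i.e.\ $E^u(\lambda,z)$ is Lagrangian.

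The step I expect to be the crux is not any single computation but the need for this quantitative comparison of rates. The tempting shortcut---that $j(E^s(\lambda,z))$ converges projectively to $j(\mathrm{sp}\{\eta_1(\lambda),\eta_2(\lambda)\})$, which one checks is Lagrangian, and that the Lagrangian set $\{p_{13}=p_{24}\}$ is closed and, by Lemma~\ref{Lag condition lma}, invariant, hence $E^s$ is Lagrangian---is \emph{not} valid, because an orbit of the flow induced on $\Gr_2(\bbR^4)$ may limit onto a point of the invariant set $\Lambda(2)$ while remaining in its (equally invariant) complement, exactly as a nonzero orbit of $x'=x$ limits on the origin. What rescues the argument is that the symplectic pairing grows precisely like $e^{-cz}$ along the flow while on the stable bundle it is pinned to $e^{(\mu_1(\lambda)+\mu_2(\lambda))z}$, and these are incompatible exactly because $\mu_1(\lambda)+\mu_2(\lambda)+c<0$; verifying this inequality and its mirror $\mu_3(\lambda)+\mu_4(\lambda)+c>0$ from (\ref{evals of A(lambda)}) is where assumptions (A1)--(A3) on the tails of $\varphi$ enter. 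The same bookkeeping can equivalently be phrased purely in Pl\"ucker coordinates, using $(p_{13}-p_{24})'=-c(p_{13}-p_{24})$ from (\ref{Plucker matrix}) in place of the first step and $e^{-(\mu_1(\lambda)+\mu_2(\lambda))z}u_1\wedge u_2\to\eta_1\wedge\eta_2$ from (\ref{basis of bundles 2}); the content is the same, and as a byproduct one learns that $\mathrm{sp}\{\eta_1(\lambda),\eta_2(\lambda)\}$ and $\mathrm{sp}\{\eta_3(\lambda),\eta_4(\lambda)\}$ are themselves Lagrangian.
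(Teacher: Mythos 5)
Your proof is correct and is essentially the paper's argument: the paper packages your first step as the $z$-independent form $\Omega(a,b)=e^{cz}\omega(a,b)$ and then, just as you do, kills it by letting $z\to+\infty$ for $E^s$ (where $\mu_1+\mu_2+c<0$) and $z\to-\infty$ for $E^u$ (where $\mu_3+\mu_4+c>0$), using the limits (\ref{individual soln decay}). The only cosmetic difference is that for the stable bundle the paper invokes $c<0$ together with the decay of $u_{1,2}$ rather than the sharper rate $\mu_1+\mu_2+c<0$ you compute, so no substantive gap or divergence.
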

\begin{proof}
Define another symplectic form $\Omega$ by \begin{equation}\label{Omega defn}
\Omega(a,b)=e^{cz}\omega(a,b).
\end{equation} Clearly, $E^{u/s}(\lambda,z)$ is $\omega-$Lagrangian if and only if it is $\Omega-$Lagrangian for any $z$. Furthermore, $\Omega(u,v)$ is independent of $z$ for any solutions $u,v$ of (\ref{eval eqn 1st order}). To see this, we compute \begin{equation}
\frac{d}{dz}\Omega(u,v) = e^{cz}\left(c\omega(u,v)+\frac{d}{dz}\omega(u,v) \right)=0,
\end{equation} by (\ref{Lemma pf 2}). Now, for the stable bundle, we see that \[\Omega(u_1,u_2)=\lim\limits_{z\rightarrow\infty}e^{cz}\omega(u_1(\lambda,z),u_2(\lambda,z))=0, \] since $c<0$ and $u_{1,2}$ decay exponentially in forward time for all $\lambda$. For the unstable bundle, notice that $\mu_3+\mu_4>-c$ for all $\lambda$, so \[e^{cz}\omega(u_3,u_4)=e^{(c+\mu_3+\mu_4)z}\omega(e^{-\mu_3z}u_3,e^{-\mu_4z}u_4)\rightarrow 0 \text{ as } z\rightarrow-\infty, \] since the exponential in front has a positive exponent, and the arguments of $\omega$ are bounded. In both cases, $\Omega$ is identically zero, so $\omega$ must be as well.
\end{proof}

\section{Symplectic Version of the Evans Function}
Recall that for $\lambda\in I$, we picked out spanning solutions $u_i(\lambda,z)$ for the stable and unstable bundles. This allows us to rewrite the Evans function \begin{equation}\label{Evans fct individual vecs}
\begin{aligned}
D(\lambda)\mathrm{vol}^*=e^{2cz}E^{s}(\lambda,z)\wedge E^{u}(\lambda,z) & =e^{2cz}u_1(\lambda,z)\wedge u_2(\lambda,z)\wedge u_3(\lambda,z)\wedge u_4(\lambda,z)\\
& = e^{2cz}\det[u_1,u_2,u_3,u_4]\,\mathrm{vol}^*.
\end{aligned}
\end{equation}

In order to exploit the symplectic structure of the preceding section in the Evans' function analysis, we need to rewrite $D(\lambda)$ in terms of the symplectic form. This idea was pioneered in \cite{BD99,BD01} for systems of Hamiltonian PDEs with a multi-symplectic structure, and the following formula first appeared in \cite{CB14}. The slight difference in our formula and that of Chardard-Bridges' is due to the fact that the symplectic form is different in activator-inhibitor systems.
\begin{theorem}\label{symplectic evans fct}
Let $a_1,a_2,b_1,b_2\in\bbR^4$. Then \begin{equation}\label{symplectic det}
\det[a_1,a_2,b_1,b_2]=-\det\left[\begin{array}{c c}
\omega(a_1,b_1) & \omega(a_1,b_2)\\
\omega(a_2,b_1) & \omega(a_2,b_2)
\end{array}\right]+\omega(a_1,a_2)\omega(b_1,b_2).
\end{equation}
\end{theorem}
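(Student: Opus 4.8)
The plan is to reduce (\ref{symplectic det}) to an identity between two bilinear functionals on $\bigwedge^2\bbR^4$ and then verify it by expanding both sides in Pl\"ucker coordinates. First I would observe that each side of (\ref{symplectic det}) depends only on the two-vectors $a_1\wedge a_2$ and $b_1\wedge b_2$: the left-hand side because $\det[a_1,a_2,b_1,b_2]\,\mathrm{vol}^*=(a_1\wedge a_2)\wedge(b_1\wedge b_2)$, and the right-hand side because $\omega(a_1,a_2)\omega(b_1,b_2)$ and the $2\times2$ determinant are each multilinear and alternating in $(a_1,a_2)$ and in $(b_1,b_2)$, hence factor through $a_1\wedge a_2$ and $b_1\wedge b_2$. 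Consequently it suffices to match the expansions of the two sides as degree-$(1,1)$ polynomials in the Pl\"ucker coordinates $(p_{ij})$ of $a_1\wedge a_2$ and $(q_{ij})$ of $b_1\wedge b_2$.

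For the left-hand side, expanding $(a_1\wedge a_2)\wedge(b_1\wedge b_2)$ in the basis $\{e_i\wedge e_j\}$ of $\bigwedge^2\bbR^4$ annihilates every product except those indexed by complementary pairs, and reading off the sign of $e_i\wedge e_j\wedge e_k\wedge e_l$ yields $\det[a_1,a_2,b_1,b_2]=p_{12}q_{34}+p_{34}q_{12}-p_{13}q_{24}-p_{24}q_{13}+p_{14}q_{23}+p_{23}q_{14}$. For the right-hand side, (\ref{Plucker coord defn}) together with the definition (\ref{omega defn}) of $\omega$ gives immediately $\omega(a_1,a_2)=p_{13}-p_{24}$ and $\omega(b_1,b_2)=q_{13}-q_{24}$. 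For the $2\times2$ determinant I would write $\omega(a_i,b_j)=\langle a_i,Jb_j\rangle$ using (\ref{omega defn complex structure})--(\ref{J}), so that the matrix $[\omega(a_i,b_j)]$ factors as a $2\times4$ matrix times a $4\times2$ matrix; the Cauchy--Binet formula then expresses its determinant as $\sum_{k<l}p_{kl}\,\hat q_{kl}$, where $\hat q_{kl}$ are the Pl\"ucker coordinates of $Jb_1\wedge Jb_2=\bigwedge^2 J\,(b_1\wedge b_2)$. Since $J$ sends $e_1\mapsto-e_3$, $e_2\mapsto e_4$, $e_3\mapsto e_1$, $e_4\mapsto-e_2$, one computes $\hat q_{12}=-q_{34}$, $\hat q_{13}=q_{13}$, $\hat q_{14}=-q_{23}$, $\hat q_{23}=-q_{14}$, $\hat q_{24}=q_{24}$, $\hat q_{34}=-q_{12}$. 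Substituting these into $-\det[\omega(a_i,b_j)]+\omega(a_1,a_2)\omega(b_1,b_2)$, the $p_{13}q_{13}$ and $p_{24}q_{24}$ contributions cancel and what remains is precisely the expansion of the left-hand side obtained above.

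The only genuine work here is the sign bookkeeping in these two expansions --- the reorderings of $e_i\wedge e_j\wedge e_k\wedge e_l$ on the left, and the interaction of the $J$-action with Cauchy--Binet on the right --- but this is routine rather than conceptually difficult. A variant that avoids it altogether: since both sides are bilinear forms on $\bigwedge^2\bbR^4$, it is enough to verify (\ref{symplectic det}) on the decomposable pairs $(e_i\wedge e_j,\,e_k\wedge e_l)$; using that $\omega(e_p,e_q)\neq0$ only for $\{p,q\}\in\{\{1,3\},\{2,4\}\}$, one sees both sides vanish unless $\{i,j\}$ and $\{k,l\}$ are complementary, and the symmetry of (\ref{symplectic det}) under interchanging $(a_1,a_2)$ with $(b_1,b_2)$ then leaves only a handful of explicit cases to check.
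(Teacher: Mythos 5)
Your argument is correct. I verified the key computations: the expansion $\det[a_1,a_2,b_1,b_2]=p_{12}q_{34}+p_{34}q_{12}-p_{13}q_{24}-p_{24}q_{13}+p_{14}q_{23}+p_{23}q_{14}$, the identities $\omega(a_1,a_2)=p_{13}-p_{24}$ and $\omega(b_1,b_2)=q_{13}-q_{24}$, the action of $\bigwedge^2 J$ on the basis two-vectors (your table of $\hat q_{kl}$ is right), and the Cauchy--Binet factorization $\det[\omega(a_i,b_j)]=\det(A^TJB)=\sum_{k<l}p_{kl}\hat q_{kl}$; substituting, the $p_{13}q_{13}$ and $p_{24}q_{24}$ terms cancel against $\omega(a_1,a_2)\omega(b_1,b_2)$ exactly as you claim. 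This is a genuinely different (and more informative) route than the paper takes: the paper gives no computation at all, deferring to \cite{CB14}, where the formula is proved in arbitrary even dimension via the Leibniz expansion of the determinant, and simply remarking that in dimension four a brute-force check suffices. Your version actually carries out that check, but organizes it so that the identity is seen to be a statement about two bilinear forms on $\bigwedge^2\bbR^4$, with the $2\times2$ determinant identified as the pairing of $a_1\wedge a_2$ with $\bigl(\bigwedge^2 J\bigr)(b_1\wedge b_2)$; this both eliminates unstructured case-checking and would adapt to other symplectic forms $\omega(\cdot,\cdot)=\langle\cdot,J\cdot\rangle$ by recomputing $\bigwedge^2J$, whereas the Leibniz-formula proof of \cite{CB14} is what one would use to get the general $2n$-dimensional statement. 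One small caution on your closing variant: for non-complementary basis pairs such as $(e_1\wedge e_3,\,e_1\wedge e_3)$ the right-hand side is not termwise zero --- the determinant term and $\omega(a_1,a_2)\omega(b_1,b_2)$ are each equal to $1$ and cancel --- so ``both sides vanish'' there only after a cancellation; this does not affect the validity of your main argument, which stands on its own.
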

This formula is proved for arbitrary (even) dimension in \cite{CB14} using the Leibniz formula for determinants. However, in this low-dimensional case it is easy enough to verify using brute force. Notice that the second term in (\ref{symplectic det}) disappears if either $\text{sp}\{a_1,a_2\}$ or $\text{sp}\{b_1,b_2\}$ is a Lagrangian plane. Combining (\ref{Evans fct individual vecs}) and (\ref{symplectic det}), we arrive at the symplectic Evans function. \begin{cor}\label{Evans fct def cor} The symplectic Evans function is \begin{equation}\label{sympelctic Evans fct}
D(\lambda)=-e^{2cz}\det\left[\begin{array}{c c}
\omega(u_1,u_3) & \omega(u_1,u_4)\\
\omega(u_2,u_3) & \omega(u_2,u_4)
\end{array}\right].
\end{equation}
\end{cor}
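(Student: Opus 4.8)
The plan is to combine the determinant form of the Evans function in (\ref{Evans fct individual vecs}) with the purely algebraic identity (\ref{symplectic det}) of Theorem \ref{symplectic evans fct}, and then discard the correction term using the fact that the stable and unstable bundles are Lagrangian. Concretely, (\ref{Evans fct individual vecs}) gives
\[
D(\lambda) = e^{2cz}\det[u_1(\lambda,z),u_2(\lambda,z),u_3(\lambda,z),u_4(\lambda,z)],
\]
so the task reduces to re-expressing this determinant symplectically. Applying (\ref{symplectic det}) with $a_1=u_1$, $a_2=u_2$, $b_1=u_3$, $b_2=u_4$ yields
\[
\det[u_1,u_2,u_3,u_4] = -\det\left[\begin{array}{cc}\omega(u_1,u_3) & \omega(u_1,u_4)\\ \omega(u_2,u_3) & \omega(u_2,u_4)\end{array}\right] + \omega(u_1,u_2)\,\omega(u_3,u_4).
\]

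The second step is to note that the rank-one correction $\omega(u_1,u_2)\,\omega(u_3,u_4)$ vanishes identically. By Theorem \ref{un-stable bundles Lagrangian}, $E^s(\lambda,z)=\mathrm{sp}\{u_1,u_2\}$ and $E^u(\lambda,z)=\mathrm{sp}\{u_3,u_4\}$ are $\omega$-Lagrangian for every $\lambda\in I$ and $z\in\bbR$, so $\omega(u_1,u_2)=0=\omega(u_3,u_4)$; either vanishing alone suffices to kill the product. Substituting back and multiplying through by $e^{2cz}$ gives exactly (\ref{sympelctic Evans fct}).

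There is no real obstacle here: the corollary is a formal consequence of Theorems \ref{symplectic evans fct} and \ref{un-stable bundles Lagrangian}, so the only thing to track is bookkeeping. One should recall that the spanning solutions $u_i(\lambda,z)$ were constructed only for $\lambda\in I$ in (\ref{individual soln decay}), so (\ref{sympelctic Evans fct}) is being asserted on that interval (which contains $\lambda=0$, the case of interest). It is also worth remarking that the right-hand side of (\ref{sympelctic Evans fct}) is visibly $z$-independent, consistent with $D(\lambda)$ being constant in $z$: distributing $e^{2cz}=e^{cz}\cdot e^{cz}$ into the two rows (or columns) of the $2\times 2$ determinant replaces each entry $\omega(u_i,u_j)$ by $\Omega(u_i,u_j)=e^{cz}\omega(u_i,u_j)$, and each such quantity was shown to be independent of $z$ in the proof of Theorem \ref{un-stable bundles Lagrangian}.
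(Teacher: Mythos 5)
Your proposal is correct and matches the paper's own argument: combine the determinant expression (\ref{Evans fct individual vecs}) with the identity (\ref{symplectic det}), noting that the term $\omega(u_1,u_2)\,\omega(u_3,u_4)$ vanishes because $E^s$ and $E^u$ are Lagrangian by Theorem \ref{un-stable bundles Lagrangian}. Even your closing remark about $z$-independence via distributing $e^{cz}$ into the rows to replace $\omega$ by $\Omega$ is exactly the observation made in the paper immediately after the corollary.
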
 In this form, it is easy to see the $z-$independence of $D$. Distributing one copy of $e^{cz}$ to each row of the matrix in (\ref{sympelctic Evans fct}), one can replace $e^{cz}\omega$ with $\Omega$ in each entry.

Let us now consider the case $\lambda=0$. First, due to translation invariance, the derivative of the traveling wave $\vp'(z)$ is a zero-eigenfunction for $L$, hence $D(0)$ should be zero. Indeed, following (A2) and the discussion in \S 3 we set $\vp'=u_2(0,z)=u_3(0,z)$. From Theorem \ref{un-stable bundles Lagrangian}, it follows that each entry of the matrix in (\ref{sympelctic Evans fct}) is zero, with the possible exception of $\omega(u_1,u_4)$. Thus $D(0)=0$, as expected. Corollary \ref{Evans fct def cor} can also be used to show that $D'(0)=0$ if the stable and unstable bundles have a two-dimensional intersection (i.e. they are tangent to each other). In this case, the matrix in (\ref{sympelctic Evans fct}) is the zero matrix for $\lambda=0$, and an application of the product rule shows that $D'(0)=0$.

The main result of this paper involves $D'(0)$, so we start by calculating this using Jacobi's formula. The second part of this calculation is inspired by the proof of Theorem 1.11 in \cite{PW}, and similar calculations are carried out in \cite{BD01}.
\begin{lemma}\label{Lemma D prime}
The quantity $D'(0)$ is given by \begin{equation}\label{Evans fct deriv formula}
D'(0)=\Omega(u_1,u_4)\int\limits_{-\infty}^{\infty}e^{cz}\left(\frac{(\hat{u}')^2}{\sigma}-\frac{(\hat{v}')^2}{\alpha}\right)\,dz.
\end{equation}
\end{lemma}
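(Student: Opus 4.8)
The plan is to compute $D'(0)$ directly from the symplectic formula in Corollary~\ref{Evans fct def cor}, using that at $\lambda=0$ all but one entry of the $2\times 2$ matrix vanishes because $u_2(0,z)=u_3(0,z)=\vp'$ and both bundles are Lagrangian (Theorem~\ref{un-stable bundles Lagrangian}). Write $D(\lambda)=-e^{2cz}\det M(\lambda)$ with $M=(\omega(u_i,u_{j+2}))$, and apply Jacobi's formula $\frac{d}{d\lambda}\det M = \det M \cdot \tr(M^{-1}M')$, or more usefully its cofactor form $(\det M)' = \tr(\mathrm{adj}(M)\,M')$. Since $M(0)$ has only the $(1,2)$ entry $\omega(u_1,u_4)$ possibly nonzero, the adjugate $\mathrm{adj}(M(0))$ has a single nonzero entry, and the product rule collapses to
\begin{equation}
D'(0) = -e^{2cz}\,\omega(u_1,u_4)\cdot\frac{d}{d\lambda}\Big|_{0}\!\big[\omega(u_2,u_3)\big] = -e^{2cz}\,\omega(u_1,u_4)\cdot\partial_\lambda\omega(u_2,u_3)\big|_{0},
\end{equation}
where I must keep track of the sign coming from the overall $-$ and from the cofactor position $(2,1)$ of $\mathrm{adj}(M)$. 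Distributing one factor $e^{cz}$ onto each of $u_1$ and $u_4$ converts $e^{2cz}\omega(u_1,u_4)$ into $\Omega(u_1,u_4)$, which is $z$-independent by the computation in the proof of Theorem~\ref{un-stable bundles Lagrangian}; the remaining $e^{cz}\partial_\lambda\omega(u_2,u_3)|_0$ must then be shown to equal, up to sign, $-\int_{-\infty}^\infty e^{cz}\big((\hat u')^2/\sigma - (\hat v')^2/\alpha\big)\,dz$.

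The substantive step is evaluating $\partial_\lambda \omega(u_2,u_3)\big|_{\lambda=0}$. Set $\dot u_i = \partial_\lambda u_i$. Differentiating the eigenvalue equation $u_i' = A(\lambda,z)u_i$ in $\lambda$ gives the inhomogeneous variational equation $\dot u_i{}' = A(0,z)\dot u_i + (\partial_\lambda A)\,u_i$, where $\partial_\lambda A = \mathrm{diag}$-type, namely the matrix with $1/\sigma$ in the $(3,1)$ slot and $1/\alpha$ in the $(4,2)$ slot (from $A$ in \eqref{matrix A defn}). Now I compute, using $\Omega(a,b)=e^{cz}\omega(a,b)=e^{cz}\langle a,Jb\rangle$ and $A^TJ+JA=-cJ$,
\begin{equation}
\frac{d}{dz}\Big[e^{cz}\omega(\dot u_2,u_3) + e^{cz}\omega(u_2,\dot u_3)\Big]\Big|_{\lambda=0}
= e^{cz}\big[\omega((\partial_\lambda A)u_2,u_3) + \omega(u_2,(\partial_\lambda A)u_3)\big],
\end{equation}
because the homogeneous terms cancel exactly as in \eqref{Lemma pf 2} together with the $c\omega$ term from differentiating $e^{cz}$. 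At $\lambda=0$ we have $u_2=u_3=\vp' = (\hat u',\hat v',\hat u''/\sigma,\hat v''/\alpha)$, so the right side is $2e^{cz}\omega((\partial_\lambda A)\vp',\vp')$, and a direct computation of $\omega((\partial_\lambda A)\vp',\vp') = \langle (\partial_\lambda A)\vp', J\vp'\rangle$ with the explicit $J$ in \eqref{J} yields exactly $(\hat u')^2/\sigma - (\hat v')^2/\alpha$ (the cross terms cancel by skew-symmetry of $\omega$). Integrating from $-\infty$ to $\infty$: the left side integrates to the difference of boundary values of $e^{cz}\big(\omega(\dot u_2,u_3)+\omega(u_2,\dot u_3)\big)$, which vanish at both ends because $c<0$, $u_{2,3}$ and their $\lambda$-derivatives decay in forward time (rate $\mu_2(0)$) and in backward time the product $e^{cz}$ times the bounded weighted vectors tends to $0$ since $c+\mu_2(0)+\mu_3(0)>0$ (using $\mu_3(0)+\mu_4(0)>-c$ from Theorem~\ref{un-stable bundles Lagrangian} and $\mu_2(0)<\mu_4(0)$, or directly from \eqref{evals of 0}); polynomial prefactors from $\partial_\lambda$ do not affect this. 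Hence $\partial_\lambda\omega(u_2,u_3)|_0 \cdot$(boundary) assembles so that $\int_{-\infty}^\infty 2e^{cz}\big((\hat u')^2/\sigma-(\hat v')^2/\alpha\big)\,dz$ equals the relevant evaluated quantity, delivering \eqref{Evans fct deriv formula} after reconciling constants.

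The main obstacle will be bookkeeping: getting the sign in Jacobi's formula right through the overall minus sign and the cofactor position, confirming that $\partial_\lambda\omega(u_2,u_3)|_0$ is genuinely $z$-independent after multiplication by $e^{cz}$ (so that ``$\frac{d}{d\lambda}$'' commutes cleanly with evaluating at any convenient $z$), and carefully justifying the vanishing of the boundary terms for the $\lambda$-differentiated solutions, where the decay estimates \eqref{individual soln decay} must be upgraded to their $\lambda$-derivatives (uniform-on-compacts analyticity from the discussion after \eqref{individual soln decay} gives this, with at most an extra polynomial factor). The factor of $2$ appearing from $u_2=u_3$ and the normalization of $\vp'$ in both bundles needs to be tracked so that it is absorbed correctly and does not survive into the final formula.
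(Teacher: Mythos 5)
Your reduction of $D'(0)$ via Jacobi's formula to (a sign times) $\Omega(u_1,u_4)\,\p_\lambda\Omega(u_2,u_3)\vert_{\lambda=0}$ matches the paper's first step, but your evaluation of $\p_\lambda\Omega(u_2,u_3)\vert_{\lambda=0}$ contains a genuine error. The quantity you differentiate in $z$, namely $e^{cz}\bigl(\omega(\dot u_2,u_3)+\omega(u_2,\dot u_3)\bigr)=\p_\lambda\Omega(u_2,u_3)$, is independent of $z$ (it is the $\lambda$-derivative of a $z$-independent function), so its $z$-derivative is identically zero. Consistently, your right-hand side $e^{cz}\bigl[\omega(A_\lambda u_2,u_3)+\omega(u_2,A_\lambda u_3)\bigr]$ vanishes at $\lambda=0$: since $u_2=u_3=\vp'$, skew-symmetry gives $\omega(\vp',A_\lambda\vp')=-\omega(A_\lambda\vp',\vp')$, so the two terms cancel rather than doubling; the claimed value $2e^{cz}\omega(A_\lambda\vp',\vp')$ treats $\omega$ as symmetric. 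Your scheme therefore collapses to $0=0$ and cannot yield the integral. Likewise, the assertion that the boundary values of $e^{cz}\bigl(\omega(\dot u_2,u_3)+\omega(u_2,\dot u_3)\bigr)$ vanish at both ends cannot be right: the quantity is constant in $z$, so vanishing at either end would force $\p_\lambda\Omega(u_2,u_3)\vert_0=0$ and hence $D'(0)=0$ identically, contradicting (A4); concretely, $\dot u_3$ picks up the $e^{\mu_4(0)z}$ mode as $z\to+\infty$ and $\dot u_2$ the $e^{\mu_1(0)z}$ mode as $z\to-\infty$, so the individual weighted terms are not small where you need them to be.

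The missing idea is to break the symmetry before integrating. The paper uses $\mu_2(\lambda)+\mu_3(\lambda)\equiv-c$ to write $\Omega(u_2,u_3)=\omega(U,V)$ with $U=e^{-\mu_2(\lambda)z}u_2$ and $V=e^{-\mu_3(\lambda)z}u_3$, so that $\p_\lambda\Omega(u_2,u_3)=\omega(U_\lambda,V)+\omega(U,V_\lambda)$, where each summand is \emph{not} constant in $z$: one computes $\p_z\,\omega(U_\lambda,V)=\omega(A_\lambda U,V)-\mu_2'(\lambda)\,\omega(U,V)$, which at $\lambda=0$ equals $e^{cz}\omega(A_\lambda\vp',\vp')=-e^{cz}\bigl((\hat u')^2/\sigma-(\hat v')^2/\alpha\bigr)$, and $\p_z\,\omega(U,V_\lambda)=-\p_z\,\omega(U_\lambda,V)$. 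The fundamental theorem of calculus is then applied to $\omega(U_\lambda,V)$ on $[0,R]$ and to $\omega(U,V_\lambda)$ on $[-S,0]$, i.e.\ each summand on the half-line where its boundary term is controllable ($U_\lambda$ has a finite limit as $z\to+\infty$ by the uniformity in \eqref{individual soln decay} while $V=e^{-\mu_3(0)z}\vp'\to0$ there, and symmetrically at $-\infty$); adding and letting $R,S\to\infty$ gives \eqref{Evans fct deriv formula}. A variant of your setup could be salvaged by applying this same split-half-line integration to $e^{cz}\omega(\dot u_2,u_3)$ and $e^{cz}\omega(u_2,\dot u_3)$ separately, but then you must actually estimate the growth of $\dot u_2$ at $+\infty$ and $\dot u_3$ at $-\infty$; as written, the symmetric integration over all of $\bbR$ and the factor of $2$ are not fixable bookkeeping, they are where the argument fails.
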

Before giving the proof, a few comments are in order. First, the fact that the wave is transversely constructed implies that both terms in the product in (\ref{Evans fct deriv formula}) are nonzero. The integral depends solely on the wave itself, and this will be calculable. The term $\Omega(u_1,u_4)$ on the other hand--named the Lazutkin-Treschev invariant in \cite{CB14}--carries information about the intersection of the stable and unstable manifolds. The decay rates of the $u_i$ and the $z-$independence of $\Omega$ can be used to show that $e^{-\mu_4(0)z}u_4(0,z)$ converges to a multiple of $\eta_4(0)$ in forward time. However, the orientation of this vector (i.e. whether that multiple is positive or negative) is difficult to ascertain, and it is what determines the sign of $D'(0)$. We will see that the Maslov index can be used to circumvent this difficulty. Now for the proof of the Lemma.

\begin{proof}
Denote by $\Sigma(\lambda,z)$ the matrix in (\ref{sympelctic Evans fct}), and let $\Sigma(\lambda,z)^\#$ be its adjugate (i.e. the transpose of its cofactor matrix). By the Jacobi formula (\S 8.3 of \cite{MN88}), we have \begin{equation}
\begin{aligned}
D'(0) & = -e^{2cz}\,\text{Trace}(\Sigma^\#\Sigma_\lambda)\vert_{\lambda=0}.\\
& =-\text{Trace}\left(\left[\begin{array}{c c}
\Omega(u_2,u_4) & -\Omega(u_1,u_4)\\
-\Omega(u_2,u_3) & \Omega(u_1,u_3)
\end{array}\right]\left[\begin{array}{c c}
\p_\lambda\Omega(u_1,u_2) & \p_\lambda\Omega(u_1,u_4)\\
\p_\lambda\Omega(u_2,u_3) & \p_\lambda\Omega(u_2,u_4)
\end{array}\right]\right)\big\vert_{\lambda=0}\\
& =-\text{Trace}\left(\left[\begin{array}{c c}
0 & -\Omega(u_1,u_4)\\
0 & 0
\end{array}\right]\left[\begin{array}{c c}
\p_\lambda\Omega(u_1,u_2) & \p_\lambda\Omega(u_1,u_4)\\
\p_\lambda\Omega(u_2,u_3) & \p_\lambda\Omega(u_2,u_4)
\end{array}\right]\right)\big\vert_{\lambda=0}\\
& = \Omega(u_1,u_4)\p_\lambda\Omega(u_2,u_3)\vert_{\lambda=0}.
\end{aligned}
\end{equation} The vanishing terms in the third equality are due to the fact that $u_2=u_3=\vp'$ when $\lambda=0$, and $E^{u/s}(0,z)$ are both Lagrangian planes for all $z$. It remains to calculate $\p_\lambda\Omega(u_2,u_3)$. Since $\mu_2(\lambda)+\mu_3(\lambda)\equiv -c$, we can write $\Omega(u_2,u_3)=\omega(U,V)$, where $U=e^{-\mu_2(\lambda)z}u_2$ and $V=e^{-\mu_3(\lambda)z}u_3$, from which it follows that $\omega(U,V)$ is $z-$independent, and $\p_\lambda\,\Omega(u_2,u_3)=\p_\lambda\,\omega(U,V)$. Furthermore, $U$ and $V$ satisfy the equations \begin{equation}\label{scaled soln z derivs}
\begin{aligned}
U_z & =(A(\lambda,z)-\mu_2(\lambda))\,U\\
V_z & = (A(\lambda,z)-\mu_3(\lambda))\,V.
\end{aligned}
\end{equation} Taking derivatives in $\lambda$, we have that \begin{equation}\label{solution lambda derivs}
\begin{aligned}
U_{\lambda z} & =(A(\lambda,z)-\mu_2(\lambda))\,U_\lambda+(A_\lambda-\mu_2'(\lambda))U\\
V_{\lambda z} & = (A(\lambda,z)-\mu_3(\lambda))\,V_\lambda+(A_\lambda-\mu_3'(\lambda))V,
\end{aligned}
\end{equation} where \begin{equation}\label{A lambda}
A_\lambda=\left(\begin{array}{c c c c}
0 & 0 & 0 & 0\\
0 & 0 & 0 & 0\\
\frac{1}{\sigma} & 0 & 0 & 0\\
0 & \frac{1}{\alpha} & 0 & 0
\end{array}\right)
\end{equation} is independent of $z$. Finally, since $\p_z\,\omega(U,V)=0$, $\p_z\p_\lambda\,\omega(U,V)=0$ as well, so \begin{equation}\label{product rule eq}
\p_z\,\omega(U_\lambda,V)=-\p_z\,\omega(U,V_\lambda).
\end{equation} Using (\ref{solution lambda derivs}) and (\ref{scaled soln z derivs}), we calculate that \begin{equation}
\begin{aligned}
\p_z\,\omega(U_\lambda,V) & =\omega\left(A(\lambda,z)U_\lambda-\mu_2(\lambda)U_\lambda+A_\lambda U-\mu_2'(\lambda)U,V\right)\\ & +\omega(U_\lambda,(A(\lambda,z)-\mu_3(\lambda))\,V)\\
& = -c\,\omega(U_\lambda,V)-(\mu_2(\lambda)+\mu_3(\lambda))\omega(U_\lambda,V)+\omega(A_\lambda U,V)-\mu_2'(\lambda)\,\omega(U,V)\\
& = \omega(A_\lambda U,V)-\mu_2'(\lambda)\,\omega(U,V).
\end{aligned}
\end{equation} In the second equality, we used the fact from (\ref{Lemma pf 2}) that \begin{equation}
\omega(A(\lambda,z)v_1,v_2)+\omega(v_1,A(\lambda,z)v_2)=-c\,\omega(v_1,v_2)
\end{equation} for any $z,\lambda$ and any vectors $v_i\in\bbR^4$. This yields the third equality in conjunction with the identity $\mu_2(\lambda)+\mu_3(\lambda)\equiv -c$. If we evaluate this expression at $\lambda=0$, whence $U=e^{-\mu_2(0)z}\vp'(z)$, $V=e^{-\mu_3(0)z}\vp'(z)$, and $\omega(U,V)=\Omega(\vp',\vp')=0$, we end up with \begin{equation}\label{z-lambda partial}
\p_z\,\omega(U_\lambda,V)(0,z)=e^{cz}\omega(A_\lambda\vp',\vp')=-e^{cz}\left(\frac{(\hat{u}')^2}{\sigma}-\frac{(\hat{v}')^2}{\alpha} \right).
\end{equation} To complete the proof, we use the Fundamental Theorem of Calculus and (\ref{product rule eq}), \`{a} la \cite{PW}. For any large $R,S>0$, we have \begin{equation}
\begin{aligned}
\omega(U_\lambda,V)(0,0) & =\omega(U_\lambda,V)(0,R)+\int\limits_{0}^{R}e^{cz}\left(\frac{(\hat{u}')^2}{\sigma}-\frac{(\hat{v}')^2}{\alpha} \right)dz\\
\omega(U,V_\lambda)(0,0) & =\omega(U,V_\lambda)(0,-S)+\int\limits_{-S}^{0}e^{cz}\left(\frac{(\hat{u}')^2}{\sigma}-\frac{(\hat{v}')^2}{\alpha} \right)dz
\end{aligned}
\end{equation} Adding these equations and taking $R,S\rightarrow\infty$ gives the desired result, provided that the boundary terms vanish in the limit. Since the limits in (\ref{individual soln decay}) are achieved uniformly on compact subsets of $I$, we know that the limits \begin{equation}
\begin{aligned}
\lim\limits_{z\rightarrow-\infty}U_\lambda(\lambda,z)\\
\lim\limits_{z\rightarrow\infty}V_\lambda(\lambda,z)
\end{aligned}
\end{equation} exist. Furthermore, for $\lambda =0$, it is clear that $V=e^{-\mu_3(0)z}\vp'\rightarrow 0$ as $z\rightarrow\infty$ and that $U=e^{-\mu_2(0)z}\rightarrow 0$ as $z\rightarrow-\infty$, so the boundary terms vanish by the linearity of $\omega$, giving the result.
\end{proof} Having derived an expression for $D'(0)$, the task is now to determine its sign. To do so, we must understand the term $\Omega(u_1,u_4)$ from (\ref{Evans fct deriv formula}). The key to doing so is the Maslov index.

\section{The Maslov Index and Detection Form}
For any $\lambda\in I$, we know that the assignment $z\mapsto E^u(\lambda,z)$ yields a curve in $\text{Gr}_2(\bbR^4)$. Following the discussion in \S 3 (specifically Theorem \ref{un-stable bundles Lagrangian}), it actually defines a curve in $\Lambda(2)$, the space of \emph{Lagrangian} planes in $\bbR^4$. To any such curve, we can assign an integer invariant called the Maslov index. In \cite{CB14}, the Evans function is related to the Maslov index using a formulation of the latter due to Souriau \cite{Souriau}. This is defined for two elements in the universal cover of $\Lambda(2)$. Here, we opt for the more typical definition of the Maslov index, using intersections with the train of a fixed subspace. This approach to defining the Maslov index for homoclinic orbits has its origins in \cite{BJ,CH}. It was also used in \cite{JLM13} for periodic boundary value problems.

We now review the intersection definition of the Maslov index. This was introduced in \cite{Arnold67}, but we will need the improvement of \cite{RS93}, since there is necessarily an intersection at the right endpoint of the curve in our case due to translation invariance. Let $V\in\Lambda(2)$ be a fixed Lagrangian subspace. We can write $\Lambda(2)$ as the disjoint union of sets $\Sigma_k(V)\subset\Lambda(2),$ $k=0,1,2$, where $k$ is the dimension of the intersection between $V$ and any plane in $\Sigma_k(V)$. Each $\Sigma_k(V)$ is a submanifold of $\Lambda(2)$ of codimension $k(k+1)/2$. In particular, $\Sigma_2(V)$ contains only the plane $V$ itself. It is not difficult to see that $\overline{\Sigma_1(V)}=\Sigma_1(V)\cup\Sigma_2(V)$. The plane $V$ is called the \emph{reference plane}, and $\Sigma(V)$ the \emph{train} of $V$. We will sometimes write $\Sigma$ instead of $\Sigma(V)$ if the reference plane is unambiguous. In Arnol'd's work \cite{Arnold67,Ar85}, this set is called the \emph{singular cycle}.

The Maslov index is a count of how many times a curve $\gamma(z)\in\Lambda(2)$ intersects the train $\Sigma$ of some fixed reference plane $V\in\Lambda(2)$. The intersections are weighted by a ``crossing form,'' which was introduced in \cite{RS93} and will be discussed in more detail below. One of the assumptions we will make is that our curve has only regular crossings with the singular cycle. When the intersection is one-dimensional, this simply means that the crossing is transverse. For our purposes, the natural curve to consider is the unstable bundle $z\mapsto E^u(0,z)$. The reference plane is taken to be $V=E^s(0,\tau)$, where $\tau$ is large enough so that $V^u(0)\cap E^s(0,\tau')=\{0\}$ for all $\tau'\geq\tau$. The reasons for choosing this reference plane will be explained later. We now elaborate on the crossing form and define the Maslov index.
\begin{define}
$z^*\in\bbR$ is called a \textbf{crossing} (or conjugate point) if $E^u(0,z^*)\cap E^s(0,\tau)\neq \{0\}$. At a $k$-dimensional crossing $z^*$ $(k=1,2)$, there is a quadratic form \begin{equation}\label{crossing form}
\Gamma(E^u,E^s(0,\tau);z^*)(\xi)=\omega(\xi,A(0,z^*)\xi),
\end{equation} defined on the intersection $E^u(0,z^*)\cap E^s(0,\tau)$. $\Gamma$ is called the \textbf{crossing form}. A crossing $z^*$ is called \textbf{regular} if $\Gamma$ is nonsingular. If, in addition, $k=1$, the crossing is called \textbf{simple}. 
\end{define}
It is implicit in the preceding definition that $\Gamma$ is derived from a symmetric bilinear form. Indeed, let $z^*$ be a conjugate time, and consider the bilinear form $(v_1,v_2)\mapsto\omega(v_1,A(0,z^*)v_2)$ defined on $E^u(0,z^*)\cap E^s(0,\tau)$. Using (\ref{omega deriv calc}) and the fact that $E^u(0,z)$ is Lagrangian for all $z$, we see that \begin{equation}
\omega(v_1,A(0,z^*)v_2)-\omega(v_2,A(0,z^*)v_1)=\frac{d}{dz}\omega(v_1,v_2)=-c\omega(v_1,v_2)=0,
\end{equation} as desired. In order to define the Maslov index, it still needs to be shown that this crossing form is equivalent to the one developed in Theorem 1.1 of \cite{RS93}. This is the content of the following theorem. \begin{theorem}
The crossing form $\Gamma$ in (\ref{crossing form}) is well-defined. In other words, the Maslov index defined by this crossing form is equivalent to the Maslov index in \cite{RS93}, and consequently it enjoys the same properties.
\end{theorem}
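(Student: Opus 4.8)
The goal is to show that the intersection-based Maslov index built from the crossing form $\Gamma$ in \eqref{crossing form} agrees with the Robbin--Salamon Maslov index from \cite{RS93}, so that it inherits homotopy invariance, additivity under concatenation, and the normalization properties. The natural strategy is to invoke the uniqueness/axiomatic characterization from \cite{RS93}: the Robbin--Salamon index is the \emph{unique} assignment of a half-integer to paths in $\Lambda(2)$ satisfying a short list of axioms (homotopy with fixed endpoints, catenation/additivity, a product/symplectic-direct-sum property, and normalization on a standard one-parameter family crossing $\Sigma_1(V)$ transversally). So it suffices to verify that the index defined here via $\Gamma$ is well-defined (independent of the choices made in its construction) and that at a crossing the sign contribution computed from $\Gamma$ coincides with the signature that \cite{RS93} assigns there. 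Concretely: at a regular crossing $z^*$, the Robbin--Salamon recipe says the local contribution is $\tfrac12\operatorname{sign}$ of the crossing form $q(v) = \tfrac{d}{dz}\big|_{z^*}\, \omega_{W}(\gamma(z)v, v)$ (for an appropriate local representation $v \mapsto \gamma(z)v$ of the path relative to the reference plane $W=E^s(0,\tau)$). The first step is therefore to unwind the \cite{RS93} crossing form in our coordinates and show it equals $\omega(\xi, A(0,z^*)\xi)$ up to a positive factor.

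\textbf{Key steps.} First I would recall that for a smooth path $z \mapsto E^u(0,z)$ in $\Lambda(2)$ and a fixed $W \in \Lambda(2)$, Robbin and Salamon express the path near a crossing time $z^*$ as the graph of a symmetric bilinear form $B(z)$ on $E^u(0,z^*)$ relative to a fixed Lagrangian complement of $E^u(0,z^*)$, and the crossing form is $q = \dot B(z^*)$ restricted to $E^u(0,z^*)\cap W$. A standard computation (e.g.\ the one in \cite{RS93} itself, or the version used in \cite{CH,BJ}) identifies $\dot B(z^*)(\xi,\xi)$ with $\omega(\xi, \dot\gamma)$ where $\dot\gamma$ is the velocity of the curve, which here equals $A(0,z^*)\xi$ because $E^u(0,z)$ is a solution bundle of \eqref{eval eqn 1st order}: if $\xi = u(z^*)$ for a solution $u(z)$ with $u(z^*) \in W$, then $\frac{d}{dz}u = A(0,z)u$, so the velocity of the Lagrangian plane, contracted against $\xi$ through $\omega$, produces exactly $\omega(\xi, A(0,z^*)\xi)$. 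This is the content of \eqref{crossing form}, and the symmetry of this form has already been checked in the paragraph preceding the theorem statement using \eqref{omega deriv calc}. Second, I would check that the construction is independent of the auxiliary Lagrangian complement used to represent the path locally — this is exactly the well-definedness point in \cite{RS93}, and it transfers verbatim since $\omega$ here is a genuine (albeit nonstandard) symplectic form on $\mathbb{R}^4$ and $\Lambda(2)$ with respect to it is diffeomorphic to the standard Lagrangian Grassmannian. Third, since $\omega$ is nondegenerate, there is a linear symplectomorphism $T:(\mathbb{R}^4,\omega) \to (\mathbb{R}^4,\omega_0)$ to the standard symplectic form; $T$ carries $\Lambda(2)_\omega$ to $\Lambda(2)_{\omega_0}$, carries our curve and reference plane to a curve and reference plane in the standard setting, and carries $\Gamma$ to the standard Robbin--Salamon crossing form (because $\omega(\xi, A\xi) = \omega_0(T\xi, TAT^{-1}(T\xi))$ and $TAT^{-1}$ is the generator of the transported ODE). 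Hence the index defined here equals the \cite{RS93} index of the transported data, which by symplectic-invariance of that index is a legitimate Maslov index; pulling back, our $\Gamma$-based index has all the \cite{RS93} properties.

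\textbf{Main obstacle.} The only real subtlety — and the step I expect to occupy most of the proof — is the identification of \eqref{crossing form} with the Robbin--Salamon crossing form, because \cite{RS93} phrases things in terms of the derivative of a family of quadratic forms obtained by writing the path as a graph over a \emph{complement}, whereas here we have the clean formula $\omega(\xi, A\xi)$ coming directly from the ODE. Bridging these requires choosing a Lagrangian complement to $E^u(0,z^*)$, writing nearby planes $E^u(0,z)$ as graphs, differentiating, and seeing the complement-dependence drop out on the intersection $E^u(0,z^*)\cap W$; this is a short linear-algebra lemma but must be done carefully because the nonstandard $\omega$ means one cannot literally quote formulas that assume $\omega = \omega_0$. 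A secondary (minor) point is the regularity hypothesis: one must note that $-c\,\omega(v_1,v_2) = 0$ on the intersection forces the form to be symmetric but says nothing about nondegeneracy, so ``regular crossing'' is an honest genericity assumption on the wave, consistent with (A4). Once the crossing-form identification is in hand, invariance under the symplectomorphism $T$ and the axiomatic uniqueness of the Robbin--Salamon index finish the argument with no further work.
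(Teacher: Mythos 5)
Your proposal is correct and takes essentially the same route as the paper: the heart of both arguments is identifying the Robbin--Salamon crossing form, obtained by writing $E^u(0,z)$ near $z^*$ as a graph over a Lagrangian complement and differentiating, with $\omega(\xi,A(0,z^*)\xi)$, using the fact that $E^u$ is a solution bundle of (\ref{eval eqn 1st order}) and that the tangential ambiguity pairs to zero with $\xi$ because $E^u(0,z^*)$ is Lagrangian (the paper carries this out via the evolution operator $\Phi(z^*,z)$). The additional scaffolding you propose---conjugating to the standard symplectic form and invoking the axiomatic uniqueness of the Robbin--Salamon index---is valid but unnecessary, since the paper just quotes the complement-independence of the RS form for the nonstandard $\omega$ and matches the two forms directly.
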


\begin{proof}
Suppose that $z^*$ is a conjugate point. Take $W\in\Lambda(2)$ such that $E^u(0,z^*)\oplus W=\bbR^4$. It is well known (e.g. \S 1.6 of \cite{Duis}) that any other Lagrangian subspace transverse to $W$ can be written uniquely as the graph of a linear operator $A:E^u(0,z^*)\rightarrow W$. In particular, for $|z-z^*|<\delta\ll 1$, \begin{equation}\label{complementary subspace}
E^u(0,z)=\{v+\psi(z)v:v\in E^u(0,z^*)\}\end{equation} with $\psi(z):E^u(0,z^*)\rightarrow W$ smooth in $z$. For any $\xi\in E^u(0,z^*)\cap E^s(0,\tau)$, we therefore have a curve $w(z)\in W$ defined by $\xi+w(z)\in E^u(0,z)$, or, equivalently, $w(z)=\psi(z)\xi$. Furthermore, we have $\psi(z^*)=0$. It is shown on page 3 of \cite{RS93} that the form \begin{equation}\label{R-S crossing form}
Q(\xi)=\frac{d}{dz}\big\vert_{z=z^*}\,\omega(\xi,w(z))
\end{equation} is independent of the choice of $W$ and defines the crossing form. It therefore suffices to show that we can recover (\ref{crossing form}) from (\ref{R-S crossing form}). To that end, it will be helpful to consider the evolution operator $\Phi(\zeta,z)$ for (\ref{eval eqn 1st order}) with $\lambda=0$. $\Phi$ satisfies $\Phi(\zeta,\zeta)=\text{Id}$ and $\Phi(z^*,z)\cdot E^u(0,z^*)=E^u(0,z)$. (Here, $\cdot$ refers to the induced action of $\Phi(z^*,z)$ on a two-dimensional subspace.) Notice that (\ref{complementary subspace}) defines a curve $\gamma(z)\in E^u(0,z^*)$ by the formula \begin{equation}
\xi+\psi(z)\xi=\Phi(z^*,z)\gamma(z).
\end{equation} From above, $\gamma(z^*)=\xi$. We are now ready to compute: \begin{equation}
\begin{aligned}
\frac{d}{dz}\big\vert_{z=z^*}\,\omega(\xi,w(z)) & = \frac{d}{dz}\big\vert_{z=z^*}\,\omega(\xi,\Phi(z^*,z)\gamma(z)-\xi)\\
& = \frac{d}{dz}\big\vert_{z=z^*}\,\omega\left(\xi,\Phi(z^*,z)\gamma(z)\right)\\
& = \omega\left(\xi,A(0,z^*)\gamma(z^*)\right)+\omega\left(\xi,\gamma'(z^*) \right)\\
& = \omega\left(\xi,A(0,z^*)\xi\right)+\lim\limits_{z\rightarrow z^*}\frac{1}{z-z^*}\omega\left(\xi,\gamma(z)-\gamma(z^*) \right)\\
& = \omega\left(\xi,A(0,z^*)\xi\right).
\end{aligned}
\end{equation} The last equality follows since $\gamma(z)\in E^u(z^*)$ for all $z$, which is a Lagrangian plane containing $\xi$. This completes the proof.
\end{proof}
We are now able to define the Maslov index of $\varphi$. We stress that the index depends on the choice of reference plane. For a quadratic form $Q$, we define $n_+(Q)$ and $n_-(Q)$ to be respectively the positive and negative indices of inertia of $Q$ (page 187 of \cite{vinberg}). Thus \begin{equation}
\mathrm{sign}(Q)=n_+(Q)-n_-(Q).
\end{equation}
\begin{define}\label{Maslov of phi defn num}
Let $\tau\gg 1$ be as above. The \textbf{Maslov index} of $\varphi$ is given by \begin{equation}\label{Maslov defn}
\mathrm{Maslov}(\varphi):=\sum_{z^*\in(-\infty,\tau)}\mathrm{sign}\,\Gamma(E^u,E^s(0,\tau),z^*)+n_+(\Gamma(E^u,E^s(0,\tau),\tau)),
\end{equation} where the sum is taken over all interior crossings of $E^u(0,z)$ with $\Sigma$, the train of $E^s(0,\tau)$.
\end{define}
\begin{rem}
The term $n_+(\Gamma(E^u,E^s(0,\tau),\tau))$ appears in the above definition because $\tau$ is an endpoint crossing. As explained in \cite{RS93}, care must be taken with such crossings to ensure that the Maslov index is additive with respect to concatenation of curves (Theorem 2.3 of \cite{RS93}). In fact, our convention differs from that used in \cite{RS93}, in which $(1/2)\mathrm{sign}\Gamma$ is assigned to each endpoint of the curve. We instead use the convention of \cite{BCJLMS17,HLS16}, which is to assign $-n_-(Q)$ to crossings at a left endpoint and $n_+(Q)$ to crossings at a right endpoint. In so doing, we ensure that the Maslov index is an integer as opposed to a half-integer.
\end{rem}
The natural choice of reference plane would seem to be $V^s(0)$. This is problematic, however, because we know that $E^u(0,z)$ approaches the train of $V^s(0)$ as $z\rightarrow\infty$; $\vp'$ spans the intersection in the limit. The crossing form would have to approach 0, so it would be impossible to determine the sign of this final crossing. The idea of pulling back $V^s(0)$ slightly to $E^s(0,\tau)$ is due to Chen and Hu, see \cite{CH}. Using the properties of the Maslov index derived in \S 2 of \cite{RS93}, they prove that the Maslov index given by (\ref{Maslov defn}) is independent of $\tau$. This strategy forces an intersection at the right endpoint $z=\tau$, since $\varphi'(\tau)\in E^u(0,\tau)\cap E^s(0,\tau)$. Assumption (A4) guarantees that this intersection is one-dimensional, so it suffices to evaluate the crossing form $\Gamma$ on $\varphi'$ at $z=\tau$. If the crossing is in the positive direction, then $+1$ is contributed to the Maslov index. If the crossing is negative, then there is no contribution to the Maslov index, by (\ref{Maslov defn}).

Finally, we remind the reader that it is assumed all crossings with the train are regular. Indeed, this assumption is necessary if one wishes to define the Maslov index as a homotopy invariant for non-closed curves \cite{RS93}. However, this is not a practical concern for $\mathrm{Maslov}(\varphi)$, since we have control over the reference plane in the form of $\tau$. The image of the curve in $\Lambda(2)$ is independent of $\tau$ (other than where it ends), but by varying $\tau$ one could move the singular cycle $\Sigma$ (which is codimension one in $\Lambda(2)$) to break any tangential (i.e. irregular) crossings, since these are non-generic (c.f. \S 2.1 of \cite{Arnold67}).

The rest of this section is dedicated to proving the formula \begin{equation}\label{parity result}
(-1)^{\text{Maslov}({\vp})}=\text{sign}\,\Omega(u_1,u_4).
\end{equation} We will do this through an analysis of the conjugate point $\tau$, which is the right endpoint of the curve $(-\infty,\tau]\rightarrow E^u(0,z)\subset\Lambda(2)$. This point is critical because it encodes the translation invariance which necessitates that $\lambda=0$ be an eigenvalue of $L$. It is therefore not surprising that it should be the distinguished $z$ value that is used to connect the Evans function and Maslov index. Now, to calculate the Maslov index, we must have a way of finding the other conjugate points. This is accomplished through the introduction of the \emph{detection form} $\pi\in\left(\bigwedge^2\bbR^{4}\right)^*,$ defined by  \begin{equation}\label{detection form defn}
\pi(w_1\wedge w_2)= \det\left[e^{-\mu_1(0)\tau}u_1(0,\tau),e^{-\mu_2(0)\tau}u_2(0,\tau),w_1,w_2\right].
\end{equation} $\pi$ is called the detection form because it is 0 precisely when the plane $W=\text{sp}\{w_1,w_2\}$ intersects $E^s(0,\tau)$ nontrivially. Thus it detects conjugate points for a curve of Lagrangian planes. This form is traditionally called the dual to the characterizing two-vector $w_1\wedge w_2$ for $W$, see pp. 97-98 of \cite{PirCra}. We next define a function $\beta:\bbR\rightarrow\bbR$, which evaluates $\pi$ on $E^u(0,z)$. Explicitly, we have \begin{equation}\label{beta defn}
\beta(z)= e^{-(\mu_1+\mu_2)\tau-(\mu_3+\mu_4)z}\det[u_1(\tau),u_2(\tau),u_3(z),u_4(z)].
\end{equation} We henceforth suppress the dependence of $u_i,\mu_i$ on $\lambda$, since we take $\lambda=0$ for this calculation. For brevity, we also set $M(z)=-(\mu_1+\mu_2)\tau-(\mu_3+\mu_4)z$. Recall that $u_2=u_3=\vp'$ for $\lambda=0$, so we see immediately that $\beta(\tau)=0$, since columns two and three are both $\vp'(\tau)$. 

Now, we can use (\ref{symplectic det}) to rewrite $\beta$ as \begin{equation}\label{beta defn symp}
\beta(z)=-e^{M(z)}\det\left[\begin{array}{c c}
\omega(u_1(\tau),u_3(z)) & \omega(u_1(\tau),u_4(z))\\
\omega(u_2(\tau),u_3(z)) & \omega(u_2(\tau),u_4(z))
\end{array}\right].
\end{equation}
The next ingredient is $\beta'(\tau)$, whose sign we claim will help determine the sign of $D'(0)$. Since $\beta(\tau)=0$, we see that \begin{equation}
\beta'(\tau) = -e^{M(\tau)}\frac{d}{dz}\left[\omega(u_1(\tau),u_3(z))\omega(u_2(\tau),u_4(z))-\omega(u_1(\tau),u_4(z))\omega(u_2(\tau),u_3(z))\right]\big\vert_{z=\tau}
\end{equation}
Before jumping into the product rule expansion, recall that $u_2(\tau)=u_{3}(\tau)=\vp'(\tau)$, and hence $\text{sp}\{u_i(\tau),u_j(\tau)\}$ is Lagrangian for $(i,j)=(1,2),(1,3),(2,4),(3,4)$, with $\omega(u_2,u_3)=0$ as well. It follows that the only surviving term is $-\omega(u_1(\tau),u_4(\tau))\omega(u_2(\tau),u_3'(\tau))$. Since $M(\tau)=-(\mu_1+\mu_2+\mu_3+\mu_4)\tau=2c\tau$, we conclude that \begin{equation}\label{beta prime tau}
\beta'(\tau)=\Omega(u_1,u_4)\Omega\left(\vp'(\tau),\vp''(\tau)\right).
\end{equation}
The relation to (\ref{Evans fct deriv formula}) is now apparent. Noticing that $\vp''=A(0,z)\vp'$, the second term in (\ref{beta prime tau}) is the crossing form for the conjugate point $z=\tau$, scaled by a positive factor $e^{c\tau}$. We will show that the sign of $\Omega(u_1,u_4)$ can be determined from the Maslov index, regardless of the sign of the crossing at $z=\tau$. The tie that binds the two is $\beta(z)$. First, from (\ref{beta defn}) we can see that $\beta$ is asymptotically constant as $z\rightarrow-\infty$. Indeed, if $\tau$ is large enough, then (\ref{orientation}) and (\ref{large lambda individual}) imply that \begin{equation}\label{beta minus inf}
\lim\limits_{z\rightarrow-\infty}\beta(z) \approx\det\left[\eta_1,\eta_2,\eta_3,\eta_4\right]=\rho>0.
\end{equation} Thus $\beta(z)>0$ for large, negative $z$, provided $\tau$ is large enough. By definition, zeros of $\beta$ correspond to conjugate points for the curve $E^u(0,z)$. Heuristically, the sign of $\beta'(\tau)$ is positive if there are an odd number of conjugate points (excluding $\tau$) and negative if there are an even number of conjugate points. Since the Maslov index, roughly speaking, counts the number of conjugate points, its parity should therefore determine the sign of $\beta'(\tau)$.

To make the preceding precise, we must know a few things about zeros of $\beta$ and the Maslov index. First, since an intersection of $E^u(0,z)$ with $E^s(0,\tau)$ can be one- or two-dimensional, the contribution to the Maslov index at any (interior) conjugate point is $-2,-1,0,1,$ or $2$. Since the parity of the index is unchanged if the contribution is even, we need $\beta$ to cross through the $z-$axis if and only if the crossing is one-dimensional. Obviously, we also need $\beta$ to have finitely many zeros for this to make sense. The latter is true if we assume that there are only regular crossings, which is an assumption needed to define the Maslov index for non-loops in the first place, see \cite{RS93}. It turns out that the assumption of regularity is also sufficient for the former. This is the content of the next two lemmas.

\begin{lemma}\label{1d crossing lemma}
If $z^*$ is a conjugate point such that the intersection $E^u(0,z^*)\cap E^s(0,\tau)=\mathrm{sp}\{\xi \}$ is one-dimensional, then the crossing is regular if and only if $\beta'(z^*)\neq 0$.
\end{lemma}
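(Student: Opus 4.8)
The plan is to differentiate $\beta$ at $z^{*}$, using the symplectic expression (\ref{beta defn symp}) together with Jacobi's formula exactly as in the proof of Lemma \ref{Lemma D prime}, and to show that the result is a \emph{nonzero} scalar multiple of the crossing form $\Gamma(E^{u},E^{s}(0,\tau);z^{*})(\xi)=\omega(\xi,A(0,z^{*})\xi)$ from (\ref{crossing form}). To set up notation I would write $P=[\,u_{1}(\tau)\mid u_{2}(\tau)\,]$ and $Q(z)=[\,u_{3}(z)\mid u_{4}(z)\,]$ for the $4\times 2$ matrices whose columns span $E^{s}(0,\tau)$ and $E^{u}(0,z)$ respectively. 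Using $\omega(a,b)=a^{T}Jb$ from (\ref{omega defn complex structure}), the $2\times 2$ matrix in (\ref{beta defn symp}) is $\Sigma(z)=P^{T}JQ(z)$, so $\beta(z)=-e^{M(z)}\det\Sigma(z)$, and since the columns of $Q$ solve (\ref{eval eqn 1st order}) with $\lambda=0$ we have $Q'(z)=A(0,z)Q(z)$ and hence $\Sigma'(z)=P^{T}JA(0,z)Q(z)$.

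First I would pin down the rank of $\Sigma(z^{*})$. Since $\beta(z^{*})=0$ we have $\det\Sigma(z^{*})=0$, so $\operatorname{rank}\Sigma(z^{*})\le 1$; and if it were $0$ then $\omega(v,w)=0$ for all $v\in E^{s}(0,\tau)$ and $w\in E^{u}(0,z^{*})$, forcing $E^{u}(0,z^{*})\subseteq (E^{s}(0,\tau))^{\omega}=E^{s}(0,\tau)$ because $E^{s}(0,\tau)$ is Lagrangian, contrary to the intersection being one-dimensional. Hence $\operatorname{rank}\Sigma(z^{*})=1$. Because $\det\Sigma(z^{*})=0$, Jacobi's formula gives $\beta'(z^{*})=-e^{M(z^{*})}\operatorname{tr}\!\big(\Sigma^{\#}(z^{*})\,\Sigma'(z^{*})\big)$, where $\Sigma^{\#}$ is the adjugate. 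Rank one means $\Sigma^{\#}(z^{*})$ also has rank one, so I may write $\Sigma^{\#}(z^{*})=v_{0}w_{0}^{T}$ with $v_{0}\ne 0$ spanning $\ker\Sigma(z^{*})$ and $w_{0}\ne 0$ spanning $\ker\!\big(\Sigma(z^{*})^{T}\big)$, and then $\operatorname{tr}(\Sigma^{\#}(z^{*})\Sigma'(z^{*}))=w_{0}^{T}\Sigma'(z^{*})v_{0}=(Pw_{0})^{T}JA(0,z^{*})(Q(z^{*})v_{0})$ by cyclicity of the trace.

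The one substantive step is to identify these kernels geometrically. A vector $v$ lies in $\ker\Sigma(z^{*})$ iff $\omega(u_{i}(\tau),Q(z^{*})v)=0$ for $i=1,2$, i.e.\ iff $Q(z^{*})v\in (E^{s}(0,\tau))^{\omega}=E^{s}(0,\tau)$; since $Q(z^{*})v\in E^{u}(0,z^{*})$ automatically, this forces $Q(z^{*})v\in E^{u}(0,z^{*})\cap E^{s}(0,\tau)=\mathrm{sp}\{\xi\}$, so by injectivity of $Q(z^{*})$ we get $Q(z^{*})v_{0}=c_{1}\xi$ with $c_{1}\ne 0$. Dually, $w\in\ker\!\big(\Sigma(z^{*})^{T}\big)$ iff $Pw\in (E^{u}(0,z^{*}))^{\omega}=E^{u}(0,z^{*})$, whence $Pw\in\mathrm{sp}\{\xi\}$ and $Pw_{0}=c_{2}\xi$ with $c_{2}\ne 0$. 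Substituting, $\operatorname{tr}(\Sigma^{\#}(z^{*})\Sigma'(z^{*}))=c_{1}c_{2}\,\xi^{T}JA(0,z^{*})\xi=c_{1}c_{2}\,\omega(\xi,A(0,z^{*})\xi)$, so $\beta'(z^{*})=-c_{1}c_{2}\,e^{M(z^{*})}\,\Gamma(E^{u},E^{s}(0,\tau);z^{*})(\xi)$ with $c_{1}c_{2}e^{M(z^{*})}\ne 0$. Since the intersection is one-dimensional, the quadratic form $\Gamma$ is nonsingular precisely when $\Gamma(\xi)\ne 0$; therefore the crossing is regular if and only if $\beta'(z^{*})\ne 0$.

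I expect the only nonroutine point to be the kernel identification in the previous paragraph: the whole lemma turns on recognizing that $\ker\Sigma(z^{*})$ and $\ker\!\big(\Sigma(z^{*})^{T}\big)$, transported by $Q(z^{*})$ and $P$ respectively, both land on the line $\mathrm{sp}\{\xi\}$, which is exactly what makes $\operatorname{tr}(\Sigma^{\#}\Sigma')$ collapse to the crossing form evaluated on $\xi$. The Lagrangian property, used twice so that $(\cdot)^{\omega}$ returns the subspace itself, is the crux; the rank count, Jacobi's formula, and the concluding equivalence are all mechanical. One should also double-check the edge of the argument where $z^{*}=\tau$, but there $u_{2}=u_{3}=\vp'$ and (\ref{beta prime tau}) already records $\beta'(\tau)=\Omega(u_{1},u_{4})\,\Omega(\vp'(\tau),\vp''(\tau))$, which is the same statement with $\xi=\vp'(\tau)$, so the lemma is consistent with that computation.
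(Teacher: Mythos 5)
Your argument is correct: it establishes exactly what the paper's proof does, namely that $\beta'(z^*)$ equals a manifestly nonzero constant times $\omega(\xi,A(0,z^*)\xi)$, so that regularity of the crossing (nonsingularity of the one-dimensional form (\ref{crossing form})) is equivalent to $\beta'(z^*)\neq 0$. The execution differs in packaging rather than in substance. The paper works directly with the determinant (\ref{beta defn}): it changes basis of $E^u(0,z^*)$ to $\{\xi,\nu\}$ (picking up a nonzero factor $B$), expands $\beta'(z^*)$ by the product rule so that only the column $A(0,z^*)\xi$ survives, converts via (\ref{symplectic det}), and then uses the Lagrangian relation $\omega(\xi,\nu)=0$ (with a WLOG $\beta_2\neq0$ normalization) to collapse the $2\times2$ determinant to $\omega(u_1(\tau),\nu)\,\omega(\xi,A(0,z^*)\xi)$. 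You instead write $\beta(z)=-e^{M(z)}\det\bigl(P^TJQ(z)\bigr)$, apply Jacobi's formula at the zero of the determinant, and identify the column and row kernels of $\Sigma(z^*)$ with $\mathrm{sp}\{\xi\}$ by using twice that a Lagrangian plane is its own $\omega$-complement; this makes the trace collapse to $c_1c_2\,\omega(\xi,A(0,z^*)\xi)$. What your route buys is symmetry and economy: no choice of auxiliary vector $\nu$, no case split on $\beta_1,\beta_2$, and no need for the (slightly overstated in the paper) claim that $\omega(u_i(\tau),\nu)\neq0$ for both $i=1,2$ -- the nonvanishing of the constants $c_1,c_2$ is automatic from injectivity of $P$ and $Q(z^*)$. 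What the paper's route buys is that the same elementary column-manipulation template is reused verbatim in its Lemma \ref{2d crossing lemma} for two-dimensional crossings, whereas your adjugate argument would need a separate treatment there (the adjugate vanishes identically when $\operatorname{rank}\Sigma(z^*)=0$, consistent with $\beta'(z^*)=0$, and one must pass to $\beta''$). Your closing consistency check against (\ref{beta prime tau}) at $z^*=\tau$ is a nice sanity check but not needed for the lemma.
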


\begin{proof}
Let $\xi=\beta_1u_1(\tau)+\beta_2u_2(\tau)$ be a vector in the intersection. Let $\nu$ be a second basis vector for $E^u(0,z^*)$. As noted in \S 4 of \cite{CB14}, $\omega(u_i(\tau),\nu)\neq0$ $(i=1,2)$, else we would have $\nu\in E^s(0,z^*)$, violating the assumption that the intersection is one-dimensional. Changing to the basis $\{\xi,\nu\}$ of $E^u(0,z^*)$ would introduce a nonzero multiple in the expression for $\beta(\tau)$, which we call $B$. It follows that \begin{equation}\label{1d crossing}
\begin{aligned}
\beta'(z^*) & =Be^{M(z^*)}\left\{\det[u_1(\tau),u_2(\tau),A(0,z^*)\xi,\nu]+\det[u_1(\tau),u_2(\tau),\xi,A(0,z^*)\nu] \right\}\\
& = Be^{M(z^*)}\det[u_1(\tau),u_2(\tau),A(0,z^*)\xi,\nu]\\
& = -Be^{M(z^*)}\det\left[\begin{array}{c c}
\omega(u_1(\tau),A(0,z^*)\xi) & \omega(u_1(\tau),\nu)\\
\omega(u_2(\tau),A(0,z^*)\xi) & \omega(u_2(\tau),\nu)
\end{array}\right].\\
\end{aligned}
\end{equation} Since $\text{sp}\{\xi,\nu\}$ is a Lagrangian plane, we have \begin{equation}
0=\omega(\xi,\nu)=\beta_1\omega(u_1(\tau),\nu)+\beta_2\omega(u_2(\tau),\nu).
\end{equation} Without loss of generality, we can assume $\beta_2\neq 0$, and hence $\omega(u_2(\tau),\nu)=-\displaystyle\frac{\beta_1}{\beta_2}\omega(u_1(\tau),\nu).$ Returning to (\ref{1d crossing}), we see that \begin{equation}
\begin{aligned}
\beta'(z^*) & = -Be^{M(z^*)}\left\{-\frac{\beta_1}{\beta_2}\omega(u_1(\tau),A(0,z^*)\xi)\omega(u_1(\tau),\nu)-\omega(u_2(\tau),A(0,z^*)\xi)\omega(u_1(\tau),\nu) \right\}\\
& = \frac{B}{\beta_2}e^{M(z^*)}\omega(u_1(\tau),\nu)\omega(\beta_1u_1(\tau)+\beta_2u_2(\tau),A(0,z^*)\xi)\\
& = \frac{B}{\beta_2}\omega(u_1(\tau),\nu)e^{M(z^*)}\omega(\xi,A(0,z^*)\xi).
\end{aligned}
\end{equation} Comparing with (\ref{crossing form}), it's now clear that the crossing is regular if and only if $\beta'(z^*)\neq 0$.
\end{proof}

\begin{lemma}\label{2d crossing lemma}
If $z^*$ is a conjugate point such that the intersection $E^u(0,z^*)\cap E^s(0,\tau)$ is two-dimensional, then the following are true:
\begin{enumerate}
\item $\beta'(z^*)=0$. 
\item $\beta''(z^*)\neq0\iff$ the crossing at $\tau$ is regular.
\end{enumerate}
\end{lemma}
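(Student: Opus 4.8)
The plan is to combine the symplectic rewriting (\ref{beta defn symp}) of $\beta$ with the elementary fact that two $2$-dimensional subspaces of $\bbR^4$ meeting in a $2$-dimensional subspace must coincide. Write $\Sigma(z)$ for the $2\times 2$ matrix appearing in (\ref{beta defn symp}), so that $\beta(z)=-e^{M(z)}\det\Sigma(z)$. Since $E^u(0,z^*)$ and $E^s(0,\tau)$ are both two-dimensional and have a two-dimensional intersection, they are equal; in particular $u_3(z^*),u_4(z^*)\in E^s(0,\tau)$, and because $E^s(0,\tau)$ is Lagrangian, every entry $\omega(u_i(\tau),u_j(z^*))$ with $i\in\{1,2\}$ and $j\in\{3,4\}$ vanishes. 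Hence $\Sigma(z^*)=0$.

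For part (1) I would observe that when a matrix-valued function vanishes at a point its determinant vanishes there to at least second order: each term of $(\det\Sigma)'(z^*)$ carries a factor $\Sigma_{k\ell}(z^*)=0$, so $(\det\Sigma)(z^*)=(\det\Sigma)'(z^*)=0$, and since $e^{M(z)}$ is a smooth nonvanishing multiplier this gives $\beta(z^*)=\beta'(z^*)=0$.

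For part (2), since $\det\Sigma$ and its first derivative vanish at $z^*$, the Leibniz rule collapses to $\beta''(z^*)=-2\,e^{M(z^*)}\det\Sigma'(z^*)$; I would spell out this elementary computation (in particular the factor $2$ and the vanishing of the lower-order terms) carefully. Differentiating the entries of $\Sigma$ using $u_j'(z)=A(0,z)u_j(z)$ yields $\Sigma_{ij}'(z^*)=\omega(u_i(\tau),A(0,z^*)u_j(z^*))$ for $i\in\{1,2\}$, $j\in\{3,4\}$. Expanding $u_3(z^*)$ and $u_4(z^*)$ in the basis $\{u_1(\tau),u_2(\tau)\}$ of the intersection, say $u_{j}(z^*)=\sum_{k}C_{kj}\,u_k(\tau)$ with the (effectively $2\times 2$) matrix $C$ invertible, gives $\Sigma'(z^*)=BC$, where $B_{ik}=\omega(u_i(\tau),A(0,z^*)u_k(\tau))$. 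The matrix $B$ is precisely the Gram matrix of the crossing form $\Gamma(E^u,E^s(0,\tau);z^*)$ in the basis $\{u_1(\tau),u_2(\tau)\}$ of $E^u(0,z^*)\cap E^s(0,\tau)$; recall that $(v,w)\mapsto\omega(v,A(0,z^*)w)$ is indeed symmetric on the intersection, as noted after the definition of the crossing form, because the intersection is Lagrangian. Therefore $\det\Sigma'(z^*)=\det B\cdot\det C$ with $\det C\neq0$, so $\beta''(z^*)\neq 0$ if and only if $\det B\neq0$, i.e. if and only if $\Gamma$ is nonsingular, i.e. if and only if the crossing at $z^*$ is regular.

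The step I expect to be the main obstacle is the bookkeeping in the last paragraph: one must verify carefully that differentiating $\Sigma$ reproduces the crossing-form Gram matrix composed with a genuine change-of-basis matrix $C$ (rather than, say, its transpose or some twisted variant), and that the symmetry of $(v,w)\mapsto\omega(v,A(0,z^*)w)$ on the intersection is exactly what licenses treating $B$ as a representative of $\Gamma$. Keeping track of which nonzero scalars ($e^{M(z^*)}$ and $\det C$) get absorbed, so that the asserted chain of equivalences for nonvanishing genuinely holds, is the delicate part; by contrast the identity $\beta''(z^*)=-2e^{M(z^*)}\det\Sigma'(z^*)$ and the claim in part (1) are routine once $\Sigma(z^*)=0$ is in hand.
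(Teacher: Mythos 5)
Your proposal is correct and follows essentially the same route as the paper: both arguments rest on the fact that a two-dimensional crossing forces $E^u(0,z^*)=E^s(0,\tau)$, use the Lagrangian property to kill the lower-order terms, and identify the surviving $2\times 2$ determinant with entries $\omega(\cdot,A(0,z^*)\cdot)$ (up to the nonzero factors $e^{M(z^*)}$ and a change-of-basis determinant) as the Gram matrix of the crossing form $\Gamma$. The only cosmetic difference is that you work with the symplectic expression (\ref{beta defn symp}) throughout and observe $\Sigma(z^*)=0$ directly, whereas the paper differentiates the $4\times 4$ determinant (\ref{beta defn}) column by column and passes to the symplectic form via (\ref{symplectic det}) only at the end; like the paper's proof, you correctly interpret ``the crossing at $\tau$'' in the statement as the crossing at $z^*$.
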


\begin{proof} 
Immediately we see that \begin{equation}\label{2d crossing 1st deriv}
\begin{aligned}
\beta'(z^*) & = e^{M(z^*)}\left\{\det[u_1(\tau),u_2(\tau),A(0,z^*)u_3(z^*),u_4(z^*)]+\det[u_1(\tau),u_2(\tau),u_3(z^*),A(0,z^*)u_4(z^*)] \right\}\\
& =0,
\end{aligned}
\end{equation} since there is linear dependence in the first, second and fourth (resp. first, second and third) columns in the matrix on the left (resp. right). In a similar way, the second derivative is seen to be \begin{equation}\label{2d crossing 2nd deriv}
\beta''(z^*)  =2e^{M(z^*)}\det[u_1(\tau),u_2(\tau),A(0,z^*)u_3(z^*),A(0,z^*)u_4(z^*)]
\end{equation} Next, since the crossing is two-dimensional, we have $\text{sp}\{u_1(\tau),u_2(\tau) \}=\text{sp}\{u_3(z^*),u_4(z^*) \},$ so by some change of basis in the first two columns of (\ref{2d crossing 2nd deriv}), we end up with \begin{equation}\label{2d crossing 2nd deriv 2}
\begin{aligned}
\beta''(z^*) & =2Be^{M(z^*)}\det\left[u_3(z^*),u_4(z^*),A(0,z^*)u_3(z^*),A(0,z^*)u_4(z^*)\right]\\
& = -2Be^{M(z^*)}\det\left[\begin{array}{c c}
\omega(u_3(z^*),A(0,z^*)u_3(z^*)) & \omega(u_3(z^*),A(0,z^*)u_4(z^*))\\
\omega(u_4(z^*),A(0,z^*)u_3(z^*)) & \omega(u_4(z^*),A(0,z^*)u_4(z^*))
\end{array} \right],
\end{aligned}
\end{equation} using (\ref{symplectic det}). The symplectic version of the matrix in (\ref{2d crossing 2nd deriv 2}) is exactly the matrix of the crossing form $\Gamma$ in the basis $\{u_3(z^*),u_4(z^*) \}$ for $E^s(0,\tau)\cap E^u(0,z^*)$. To say that the crossing is regular then is to say that this matrix does not have zero as an eigenvalue. Since the determinant of this matrix is the product of the eigenvalues, the Lemma follows. 
\end{proof}
These lemmas allow us to conclude the following: consider the curve $\gamma(z)$, which is $E^u(0,z)$ restricted to an interval $(-\infty,\tau-\eps)$ containing all conjugate points prior to $\tau$. Then \begin{equation}
\mu(\gamma,E^s(0,\tau))\text{ is even}\iff \beta'(\tau)<0.
\end{equation} In other words, since $\beta(\tau)=0$, the direction in which $\beta(z)$ crosses through 0 at $\tau$ is completely determined by how many times $\beta(z)$ passed through the $z-$axis prior to $\tau$.

To calculate $\mathrm{Maslov}(\varphi)$, one would need to know the direction of the final crossing, i.e. the sign of $\omega(\vp'(\tau),\vp''(\tau)).$ However, this is \emph{not} needed to prove (\ref{parity result}). First, assume that ${\text{Maslov}({\vp})}$ is even. There are now two possibilities regarding the final crossing at $z=\tau$. If $\omega(\vp'(\tau),\vp''(\tau))>0$, then this crossing contributes $+1$ to the index, which means that there were an odd number of weighted crossings prior to $\tau$: odd $+1=$ even. (In the above notation, $\mu(\gamma,E^s(0,\tau))$ is odd.) Thus $\beta'(\tau)>0$, from which we conclude that $\Omega(u_1,u_4)>0$, using (\ref{beta prime tau}). 

On the other hand, if $\omega(\vp'(\tau),\vp''(\tau))<0$, then there must be an even number of weighted crossings prior to $\tau$, since the last crossing contributes $0$ to the count (being a negative crossing). This implies that $\beta'(\tau)<0$. Again using (\ref{beta prime tau}), we see that $\Omega(u_1,u_4)>0$, showing that its sign does not depend on the direction of the final crossing. An analogous argument shows that $\Omega(u_1,u_4)<0$ if and only if ${\text{Maslov}({\vp})}$ is odd. This proves the formula (\ref{parity result}).

\section{Application}
The Maslov index could be defined in an analogous fashion to Definition \ref{Maslov of phi defn num} for any $\lambda\in\bbR$. However, the value $\lambda=0$ plays a distinguished role, since the eigenvalue equation in that case corresponds to the variational equation for (\ref{traveling wave ODE}) along $\varphi$. One then sees that the unstable bundle $E^u(0,z)$ is tangent to $W^u(0)$ at each point along the wave. (An analogous statement holds for $E^s(0,z)$ and $W^s(0)$.) This is observed on page 73 of \cite{AJ94} and page 196 of \cite{Yan02}, but the argument is straightforward enough to outline here. Indeed, a tangent vector to $W^u(0)$ at any point of $\varphi(z)$ can be associated to a one-parameter family $\varphi_s(z)$ of orbits in $W^u(0)$, where $\varphi_0(z)=\varphi(z)$. If this family is parametrized by $s$, then $Y(z):=\partial_s\varphi_s(z)|_{s=0}$--the $s$ derivative of $\varphi_s(z)$ evaluated along $\varphi$--is seen to be a solution of the variational equation. Furthermore, the exponential decay in backwards time of all trajectories in $W^u(0)$ guarantees that $Y(z)$ converges exponentially to $0$ as well, hence it is in $E^u(0,z)$.

The upshot of the preceding paragraph is that the Maslov index can be calculated using information from the nonlinear system (\ref{traveling wave ODE}). This can sometimes be more accessible than analyzing (\ref{eval eqn 1st order}) directly. For example, Fenichel theory \cite{Fen79,JoGSP} is an invaluable tool for tracking invariant manifolds in singularly perturbed systems. These ideas were exploited in \cite{Jones84} to prove that fast waves for the Fitz-Hugh Nagumo system are stable. In that case, the sign of $D'(0)$ was known by Evans \cite{Evans4} to be related to how the center-stable and center-unstable manifolds of the fixed point $0$ cross; importantly, this was done by augmenting $c'=0$ to the underlying traveling wave equation (hence the center direction). Similarly, derivatives of $D(\lambda)$ with respect to various parameters were used to derive instability criteria in \cite{AJ94,BD99,BD01,PW}, to name a few. The Maslov index analysis herein differs from those just mentioned in that the speed parameter is fixed. The relevant information about the twisting of the unstable bundle is contained in the four-dimensional phase space of (\ref{traveling wave ODE}).

To expand on this, we consider an example. The paradigmatic activator-inhibitor system is the aforementioned FitzHugh-Nagumo equation \begin{equation}\label{FHN 4d}
\begin{aligned}
u_t & = u_{xx}+f(u)-v\\
v_t & = v_{xx}+\eps(u-\gamma v),
\end{aligned}
\end{equation} where $f(u)=u(1-u)(u-a)$. Typically, $\eps$ is taken to be very small, making this a singular perturbation problem. Furthermore, $a$ satisfies $0<a<1/2$. The stability of various traveling and standing fronts and pulses has been studied for the variation of (\ref{FHN 4d}) in which there is either no diffusion on $v$, or the diffusion coefficient is a small parameter, for example \cite{Jones84,Flo91,AGJ,Yana89}. For the case of equal diffusivities, a stability result for standing waves was obtained in \cite{CH14}.

It was shown recently \cite{CC15} using variational techniques that (\ref{FHN 4d}) possesses fast (i.e. speed $c=O(1)$ in $\eps$) traveling pulses when $\gamma$ in (\ref{FHN 4d}) is chosen small enough so that the only fixed point of the kinetics equation \begin{equation}\label{FHN kinetics}
\left(\begin{array}{c}
u\\v
\end{array}\right)'=\left(\begin{array}{c}
f(u)-v\\
\eps(u-\gamma v)
\end{array}\right)
\end{equation} is $(u,v)=(0,0)$. In another work \cite{CJ17}, we offer an existence proof for these waves based on geometric singular perturbation theory \cite{JoGSP}. Furthermore, we prove that these waves are stable using the Maslov index. As suggested above, the calculation of the index is aided by the timescale separation inherent in (\ref{FHN 4d}). The full calculation is lengthy, but we are able to give a taste of it below. We stress again that the Maslov index is fundamentally different than the orientation index used in \cite{JoGSP,AJ94}, since we do not vary $c$ in order to get the result.

Written as a first-order system, the traveling wave equation for (\ref{FHN 4d}) is \begin{equation}\label{FHN traveling wave ODE}
\left(\begin{array}{c}
u\\
v\\
w\\
y
\end{array}\right)_z=\left(\begin{array}{c}
w\\
\eps y\\
-cw-f(u)+v\\
-cy+\gamma v-u
\end{array}\right).
\end{equation}
This is a fast-slow system with three fast variables $(u,w,y)$ and one slow variable $v$. For an overview of fast-slow systems, we refer the reader to \cite{JoGSP,Kuehn}.

There is a one-dimensional critical manifold $M_0$ given by \begin{equation}
M_0=\{(u,v,w,y):v=f(u),w=0,y=\frac{1}{c}(\gamma v-u)\},
\end{equation}which is normally hyperbolic wherever $f'(u)\neq 0$. The limiting slow flow on $M_0$ is given by \begin{equation}\label{slow flow}
\dot{v}=y=\frac{1}{c}(\gamma v-f^{-1}(v)),
\end{equation} where $f^{-1}$ is defined separately on three segments of the cubic $v=f(u)$, divided by the two zeros of $f'(u)$. Of particular interest are the two outer branches corresponding to the intervals on which $f(u)$ is strictly decreasing. We call these $M_0^L$ and $M_0^R$ for the left and right branches respectively. As in the case with no diffusion on $v$, it can be shown that there is a singular homoclinic orbit $\vp_0$ to $(0,0,0,0)$ consisting of alternating fast and slow pieces. Specifically, there is a value of $c<0$ such that a heteroclinic connection exists from $(u,v,w,y)=(0,0,0,0)$ on $M_0^L$ to $(u,v,w,y)=(1,0,0,-1/c)$ on $M_0^R$. From there, the slow flow carries us up $M_0^R$ to a point $q=(f^{-1}(v^*),v^*,0,(1/c)(\gamma v^*-f^{-1}(v^*)))$ at which another heteroclinic connection exists back to $M_0^L$. After making this fast jump back to $M_0^L$, the orbit is closed up by the slow flow returning to $(0,0,0,0)$. Using the Exchange Lemma \cite{JoGSP,JK94}, this singular orbit can be shown to perturb to a homoclinic orbit $\vp_\eps$ of (\ref{FHN traveling wave ODE}) for $0<\eps\ll 1$. This, in turn, corresponds to a traveling pulse for (\ref{FHN 4d}). The construction of the pulse is very similar to that of the analogous pulses for (\ref{FHN 4d}) with no diffusion on $v$. More details of that construction can be found in \cite{JKL91}, or \S 2 of \cite{Jones84}.

The main task in \cite{CJ17} is to use the fast-slow decomposition to calculate the Maslov index. More specifically, we use the timescale separation to follow the two-dimensional $W^u(0)$ around phase space. This is done by considering separately the fast and slow segments, as well as the transitions between them. For example, consider $E^u(0,z)=T_{\vp_\eps(z)}W^u(0)$ along the fast jump. The direction of the slow flow on $M_0^L$ indicates that both unstable directions for the fixed point $0$ are fast. As long as any conjugate points are regular, then the intersections with the train $\Sigma(E^s(0,\tau))$ will persist to the $\eps\neq 0$ case. We can therefore follow $W^u(0)$ along the fast jump from $M_0^L$ to $M_0^R$ in the reduced system \begin{equation}\label{fast subsystem}
\left(\begin{array}{c}
u\\
w\\
y
\end{array}\right)_z=\left(\begin{array}{c}
w\\
-cw-f(u)\\
-cy+\gamma v-u
\end{array}\right).
\end{equation}
 It turns out that the unstable manifold along this jump is easy to describe. Indeed, observe that the equations for $u$ and $w$ decouple from $y$. It follows that any solution of (\ref{fast subsystem}) must project onto a solution of \begin{equation}\label{Nagumo front}
\left(\begin{array}{c}
u\\w
\end{array}\right)'=\left(\begin{array}{c}
w\\
-cw-f(u)
\end{array}\right).
\end{equation} This is the steady state equation for the traveling fronts considered in \cite{FMc77}. For this system, it is known \cite{McKean70} that there is a heteroclinic orbit connecting $(0,0)$ and $(1,0)$ for the value \begin{equation}\label{FHN speed}
c=c^*=\sqrt{2}(a-1/2)<0.
\end{equation}
The profile of this solution is given by \begin{equation}\label{fast jump profile}
 w(u)=\frac{\sqrt{2}}{2}u(1-u), \hspace{.2 in}0\leq u\leq 1.
 \end{equation} To get the full picture of $W^u(0)$, note that the linearization about $0$ in (\ref{fast subsystem}) is given by \begin{equation}\label{lin about 0 fast jump}
\left(\begin{array}{c}
\delta u\\
\delta w\\
\delta y
\end{array}\right)'=\left(\begin{array}{c c c}
0 & 1 & 0\\
a & -c & 0\\
-1 & 0 & -c
\end{array}\right)\left(\begin{array}{c}
\delta u\\
\delta w\\
\delta y
\end{array}\right)
\end{equation}

One sees that $(0, 0, 1)^T$ is an eigenvector for the matrix in (\ref{lin about 0 fast jump}) with eigenvalue $-c>0$. Thus the second unstable direction is the invariant $y$ direction. It follows that $W^u(0)$ for $0\leq u\leq 1$ is a cylinder over the Nagumo front (\ref{fast jump profile}). A heteroclinic orbit from $(0,0,0)$ to $(1,0,-1/c)$ is then shown to exist by a shooting argument in this cylinder. By embedding (\ref{fast subsystem}) in $\bbR^4$ with $v=0$ as a parameter, we see that there is a heteroclinic connection $\gamma(z)$ from $(0,0,0,0)$ to $p=(1,0,0,-1/c)$, and this orbit is $O(\eps)$ close to $\vp_\eps$, up to when the landing point $p$ on $M_0^R$ is reached. Furthermore, in light of (\ref{fast jump profile}), we have \begin{equation}\label{cyl tangent space}
E^u(0,z)\approx T_{\gamma}W^u(0)=\mathrm{sp}\left\{\left[\begin{array}{c}
1\\
0\\
\sqrt{2}/2-u\sqrt{2}\\
0
\end{array}\right],\left[\begin{array}{c}
0\\0\\0\\1
\end{array}\right] \right\}.
\end{equation} Here, the notation $E^u(0,z)$ is used to remind the reader that this is the curve of interest in $\Lambda(2)$. Exact values of $z$ are meaningless in this limit, since it takes infinite time for the fast jump. However, the Maslov index is independent of parametrization; it is only the image of the curve in $\Lambda(2)$ that matters, so this analysis still detects any conjugate points that are encountered on the fast jump.

Recalling Definition \ref{Maslov of phi defn num} and the ensuing paragraph, we know that the reference plane should be $V^s(0)$--the stable subspace of the fixed point $0$--flown backwards slightly along the wave. The leading order approximation to this plane is known from Fenichel theory \cite{Fen79,JoGSP}. This theory says that $M_0^{R/L}$ and their respective stable and unstable manifolds perturb to corresponding objects when $\eps>0$ is small. Moreover, the foliation of (e.g.) $W^s(M_0^L)$ given by the individual stable eigenvectors along $M_0^L$ is preserved as well, see \S 3.3 of \cite{JoGSP}. We can therefore take our reference plane to be \begin{equation}\label{ref plane fhn}
V=\mathrm{sp}\left\{\left[\begin{array}{c}
1/f'(u_\tau)\\
1\\
0\\
(1/c^*)(\gamma-1/f'(u_\tau))
\end{array} \right],\left[\begin{array}{c}
f'(u_\tau)\\
0\\
f'(u_\tau)\mu_1(u_\tau)\\
\mu_1(u_\tau)
\end{array} \right] \right\},
\end{equation} where $u_\tau$ is the $u$-coordinate of the point close to $0$ on $M_\eps^L$ at which $E^s(0,\tau)$ is pinned. The first vector in (\ref{ref plane fhn}) is obtained by differentiating the equations defining $M_0^L$ with respect to $v$, and the second is the stable eigenvector from (\ref{lin about 0 fast jump}), with corresponding eigenvalue $\mu_1(u_\tau)$. One may object that the first basis vector is a center (not stable) direction when $\eps=0$. However, it becomes stable when $\eps>0$, and the limit of this subspace as $\eps\rightarrow 0$ is smooth. It is therefore the correct space to consider in the limit.

By definition, a conjugate point $z^*$ is a value of $z$ such that $E^u(0,z^*)\cap E^s(0,\tau)\neq\{0\}$. These spaces are $O(\eps)$ close to $T_\gamma W^u(0)$ and $V$ respectively, so we detect conjugate points by seeing for which values of $u$ (the variable parameterizing $\gamma$) $T_\gamma W^u(0)\cap V\neq\{0\}$. From (\ref{cyl tangent space}) and (\ref{ref plane fhn}), it is clear such an intersection occurs only when \begin{equation}
u=\frac{1}{2}-\frac{\mu_1(u_\tau)}{\sqrt{2}}.
\end{equation}
Since $u$ increases monotonically along the fast jump, there can be at most one conjugate point. A straightforward calculation from (\ref{lin about 0 fast jump}) and (\ref{FHN speed}) shows that there is, in fact, a conjugate point, since \begin{equation}\label{front eval bounds}
-a\sqrt{2}<\mu_1(u_\tau)<-\frac{\sqrt{2}}{2},
\end{equation} and the intersection is spanned by 
\begin{equation}
\xi=\left[\begin{array}{c}
f'(u_\tau)\\
0\\
f'(u_\tau)\mu_1(u_\tau)\\
\mu_1(u_\tau)
\end{array} \right].
\end{equation}

To calculate the contribution to the Maslov index, we evaluate the crossing form (\ref{crossing form}) on $\xi$. Let $u_P$ be the $u$-coordinate of $\vp_0$ at the conjugate point $z^*$. We then compute

\begin{equation}\label{fast front xing form}
\omega(\xi,A(0,z^*)\xi)=-(f'(u_\tau))^2(\mu_1(u_\tau)^2+c\mu_1(u_\tau)+f'(u_P)).
\end{equation} 
Using (\ref{FHN speed}), (\ref{front eval bounds}), and the value \begin{equation}
\mu_1(u_\tau)=-\frac{c}{2}-\sqrt{c^2-4f'(u_\tau)}, 
\end{equation} it can be shown that the expression in (\ref{fast front xing form}) is negative, thus the contribution to the Maslov index is $-1$ along the fast front.

After landing on the right slow manifold $M_0^R$, the next stage of the singular orbit is to flow up $M_0^R$ to the point $q$ where a second heteroclinic connection exists back to $M_0^L$. To determine the contribution to the Maslov index along this slow piece, we must understand what happens to $W^u(0)$. By Deng's Lemma \cite{Schec08}, we know that $W^u(0)$ will be crushed against $W^u(M_0^R)$, the unstable manifold of $M_0^R$. Since each point on $M_0^R$ has two unstable directions, $W^u(M_0^R)$ is a three-dimensional set. We must therefore determine which unstable direction is picked out, since $W^u(0)$ is only two-dimensional. Indeed, the underlying orbit itself is tangent (to leading order) to $M_0^R$, so the configuration of $W^u(0)$ is determined by this second direction, which must be unstable. It turns out that one can use the symplectic structure to conclude that the strong unstable direction persists, since we know from \S 2 that the tangent space to $W^u(0)$ must everywhere be a Lagrangian plane. Armed with this information, it is then possible to show that there is a unique conjugate point along $M_0^R$--it occurs when $\vp'$ is (to leading order) parallel to $T_{E^s(0,\tau)}M_0^L$. However, the crossing form calculation in this case indicates that the crossing is in the positive direction, hence the contribution to the Maslov index is $+1$.

One of the challenging aspects of the analysis is determining how the transition from fast to slow dynamics occurs on the level of tangent planes. Indeed, the curve $E^u(0,z)\subset \Lambda(2)$ is discontinuous in the singular limit, since the configuration of $W^u(0)$ is different upon entering and leaving the landing point $p$ on $M_0^R$. In \cite{CJ17} we prove that there is no contribution to the Maslov index near $p$, nor in the other two ``corners" at which fast-to-slow transitions occur. Moreover, in the manner indicated in this section, we show that there is a single conjugate point on each fast and slow piece for a total of four; the crossings along the fast jumps are in the negative direction, and the crossings along the slow pieces are in the positive direction. The last crossing occurs at $z=\tau$, which is the conjugate point guaranteed to exist by Definition \ref{Maslov of phi defn num}. Since this crossing is in the positive direction, it contributes $+1$ to the Maslov index (as opposed to 0, c.f. (\ref{Maslov defn})). It follows that \begin{equation}
\mathrm{Maslov}(\vp)=-1+1-1+1=0.
\end{equation}

We show in \cite{CJ17} how this information can be used to prove that the fast traveling waves are stable. Knowing this, it must therefore be the case that $D'(0)>0$, since we know that $D(\lambda)>0$ for $\lambda\gg 1$, and there are no positive eigenvalues for $L$. The calculation outlined above shows that $\mathrm{Maslov}(\vp)=0$ is even, so it must be the case that \begin{equation}\label{deriv inequality}
\int\limits_{-\infty}^{\infty}e^{cz}\left((\hat{u}')^2-\frac{(\hat{v}')^2}{\eps}\right)\,dz>0
\end{equation} as well, in light of Lemma \ref{Lemma D prime}. Sure enough, \S 2 of \cite{CC15} (particularly equation (2.7) and Lemma 2.1) can be adapted to show that the inequality (\ref{deriv inequality}) holds, and hence $D'(0)>0$, as expected.

\paragraph*{\textbf{Acknowledgments:}} This research was partially supported by National Science Foundation (NSF) Grant DMS-1312906 and by the Office of Naval Research (ONR) Grant N00014-15-1-2112.

\bibliographystyle{amsplain}
\bibliography{AI_Maslov_v2}

\end{document}